\documentclass[12pt]{article}
\usepackage{amsmath,amssymb,amsthm}
\usepackage{geometry}
\usepackage{graphicx}
\usepackage[T1]{fontenc}
\usepackage[numbers,sort,compress]{natbib}
\newtheorem{theorem}{Theorem}[section]
\newtheorem{lemma}[theorem]{Lemma}

\newtheorem{proposition}[theorem]{Proposition}

\newtheorem{example}[theorem]{Example}
\title{On the generalized cospectral mates of chain graphs of rank 4}
\author
{
	Wei Wang\thanks{Corresponding author\\
		E-mail addresses:  wangwei.math@gmail.com (W. Wang); 3132237735@qq.com (S. Wu).}\quad\quad
	Siqi Wu\\
	\footnotesize School of Mathematics, Physics and Finance, Anhui Polytechnic University,
	Wuhu 241000, China
}
\date{}

\begin{document}
	\maketitle
	
	\begin{abstract}
	A graph is called a chain graph if it is bipartite and the
	neighborhoods of the vertices in each color class are linearly ordered
	by inclusion. We study the generalized spectral determination of chain
	graphs of adjacency rank $4$. We first show that generalized cospectral
	chain graphs have the same number of isolated vertices, reducing the
	problem to the connected case. For connected rank-$4$ chain graphs, we
	derive four explicit equations in the cell parameters that are necessary
	and sufficient for generalized cospectrality. These equations reduce to
	a quadratic equation in a single parameter. As a consequence, every
	connected rank-$4$ chain graph has at most one  generalized
	cospectral mate, and the possible mate can be reconstructed explicitly.
We further obtain a simple sufficient condition for a rank-$4$ chain graph to be determined by its generalized spectrum (DGS) and, in particular, prove that every double star graph is DGS. Finally, we construct an explicit infinite family of non-double-star rank-$4$ chain graphs that are DGS but not determined by their adjacency spectra, with both the graphs and their complements being connected.
\end{abstract}
	
		\noindent\textbf{Keywords:} Graph spectra; 
		Cospectral graphs; 
		Determined by spectrum (DS); 
		Determined by generalized spectrum
		(DGS); Chain graphs.\\
	
	\noindent\textbf{Mathematics Subject Classification:} 05C50; 05C75; 05C60
	
	\section{Introduction}
	Let $G$ be a simple graph with adjacency matrix $A(G)$. The characteristic polynomial of $G$ is
	\[
	\chi(x,G)=\det(xI-A(G)),
	\]
	and its roots constitute the spectrum of $G$. Two graphs $G$ and $H$ are said to be \emph{cospectral} if they have the same spectrum. Furthermore, $G$ and $H$ are called \emph{generalized cospectral} if they are cospectral and their complements $\overline{G}$ and $\overline{H}$ are also cospectral. We say $H$ is a generalized cospectral mate of $G$ if $G$ and $H$ are \emph{generalized cospectral} but are not isomorphic.
	
	A graph $G$ is said to be \emph{determined by its spectrum} (DS) if every graph cospectral with $G$ is isomorphic to $G$. Similarly, $G$ is said to be \emph{determined by its generalized spectrum} (DGS) if every graph generalized cospectral with $G$ is isomorphic to $G$. A fundamental and difficult problem in spectral graph theory is to characterize graphs that are DS or DGS. For background and recent developments, we refer the reader to \cite{vanDam2003,vanDam2009,Sason2025,Koval2024,Wang2017,Wang2025,Guo2025}.
	
	Although every DS graph is necessarily DGS, the converse does not hold in general. This distinction leads to the natural question posed by Lin et al.~\cite{Lin2026}: \emph{Which graphs are DGS but not DS?} As noted in Remark~3 of \cite{Lin2026}, infinite families of such graphs can be obtained by relatively simple constructions when disconnectedness is allowed for either the graph or its complement. For example, certain stars and disjoint unions of complete bipartite graphs with isolated vertices yield DGS but non-DS graphs because their complements are DS. It is therefore natural to seek nontrivial infinite families for which both the graphs and their complements are connected. Here $D(s,t)$, where $s,t\ge1$, denotes the double star obtained by joining the centers of $K_{1,s}$ and $K_{1,t}$. Lin et al.~\cite{Lin2026} made progress in this direction by investigating the family
	\[
	\mathcal{D}_1:=\{D(k+3,2k+3)\colon\,k\geq 0\}
	\]
	(illustrated in Fig.~\ref{ds}), and proving that every graph in this family is DGS but not DS. Both the members of $\mathcal{D}_1$ and their complements are connected.

	\begin{figure}[htbp]
		\centering
		\includegraphics[width=0.5\textwidth]{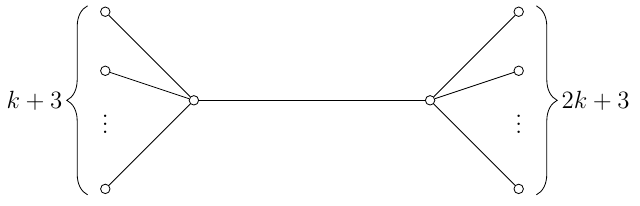}
		\caption{The double star graph $D(k+3,2k+3)$.}
		\label{ds}
	\end{figure}
	More recently, Zhu \cite{zhu2026} considered the family
	\[
	\mathcal{D}_2:=\{D(2k,k+1)\colon\, k\ge2\},
	\]
	and proved that $D(2k,k+1)$ is DGS but not DS whenever neither $k$ nor
	$2k+2$ is a perfect square. The proofs of Lin et al.~\cite{Lin2026}
	and Zhu \cite{zhu2026} are quite different. We prove the following
	unifying result.
	\begin{theorem}\label{main}
		Every double star graph is DGS.
	\end{theorem}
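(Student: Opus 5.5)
The plan is to reduce Theorem~\ref{main} to the general machinery for rank-$4$ chain graphs announced in the abstract, and then check that a double star never admits a nontrivial solution of the resulting quadratic equation.

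\emph{Step 1: a double star is a connected rank-$4$ chain graph with special cell parameters.} Let $u$ and $v$ be the centers of $D(s,t)$, with $s$ leaves attached to $u$ and $t$ leaves attached to $v$. Take the color classes $X=\{u\}\cup N(v)\setminus\{u\}$ and $Y=\{v\}\cup N(u)\setminus\{v\}$. Then $N(v)=X$ contains $N(w)=\{u\}$ for every leaf $w$ of $u$, so neighborhoods are nested; the same holds on the other side. Hence $D(s,t)$ is a chain graph with two cells in each color class, of sizes $(1,t)$ and $(1,s)$. It is connected and has adjacency rank $4$. In the paper's cell parametrization, a double star is exactly a connected rank-$4$ chain graph in which the ``innermost'' cell on each side is a singleton.

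\emph{Step 2: any generalized cospectral mate must again be a connected rank-$4$ chain graph.} Let $H$ be generalized cospectral with $D(s,t)$. I would argue as follows.
\begin{itemize}
\item Rank is a spectral invariant, so $A(H)$ has rank $4$.
\item Bipartiteness follows from the symmetric spectrum.
\item The absence of induced $2K_2$ would be forced using the generalized spectrum, via the usual walk-count/closed-walk arguments for chain graphs; the paper presumably establishes this in its reduction, and I would invoke that result.
\item The number of isolated vertices agrees by the first result of the paper. Since $D(s,t)$ has none, $H$ is connected.
\end{itemize}
Thus $H$ lies in the class for which the four necessary and sufficient equations in the cell parameters are available.

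\emph{Step 3: solve the equations for a double star.} Substitute the parameters of $D(s,t)$ into the four equations. By the paper's analysis, these collapse to a quadratic equation in a single cell parameter. One root is the parameter of $D(s,t)$ itself, so by Vieta's formulas the other root is an explicit rational function of $s$ and $t$. The remaining cell parameters of the putative mate are then determined by it.

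The main obstacle is showing that this second root never gives a valid, non-isomorphic chain graph, i.e.\ a positive integer solution with all cell sizes at least $1$. The fact that two cells have size $1$ makes the coefficients simple. I expect the second root either to coincide with the first after the color-class swap $s\leftrightarrow t$ (giving an isomorphic graph), or to force some cell size to be $0$ or negative. I would first try to write the other root explicitly and bound it, checking the few small boundary cases directly.

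Since the mate is at most unique by the paper's result, ruling it out shows that $D(s,t)$ is DGS.
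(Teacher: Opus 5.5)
Your Steps 1 and 2 are fine and coincide with the paper's setup: it writes $D(s,t)=\mathrm{DNG}(1,s;1,t)$ and uses Lemma~\ref{lem:chain-closed} and Proposition~\ref{prop:isolated-vertices} just as you do. The gap is Step 3. It is the only part of the argument specific to double stars, and you predict its outcome instead of carrying it out.

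The prediction is also inaccurate. For $\mathrm{DNG}(1,s;1,t)$ one has $n=m+1$, $c=st$, $b=2st$, and Vieta gives
\[
\xi_1=\frac{st(2s+2t+1)}{(s-t)^2+2(s+t)+1}.
\]
This equals $\xi_0=st$ only when $s=t$. Since $\xi=x_2y_2$ is invariant under interchanging the color classes, the swap cannot account for $\xi_1$ when $s\ne t$. Nor can you dismiss $\xi_1$ by integrality: for $D(4,5)$ one gets $\Phi(\xi)=20(\xi-20)(\xi-19)$, so $\xi_1=19$ is a positive integer. And no cell size becomes $\le 0$. For $\xi=\xi_1$, Lemma~\ref{lem:reconstruct} makes $x_1,y_1$ the roots of $Z^2-2rZ+r$, where $r=c/\xi_1>1$ when $s\neq t$. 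Both roots are positive, and the smaller one lies in $(1/2,1)$. So the real obstruction is non-integrality of $x_1,y_1$, which your plan does not identify.

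The idea you are missing is the identity $b=2c$, which holds precisely because $s_1=t_1=1$. Any mate $\mathrm{DNG}(x_1,x_2;y_1,y_2)$ has the same $b$ and $c$. Dividing \eqref{eq:sys_b} by \eqref{eq:sys_c} gives $(x_1+y_1)/(x_1y_1)=2$, that is, $(2x_1-1)(2y_1-1)=1$, so $x_1=y_1=1$. Then \eqref{eq:sys_c} and \eqref{eq:sys_n} give $x_2y_2=st$ and $x_2+y_2=s+t$. Hence $\{x_2,y_2\}=\{s,t\}$ and the mate is isomorphic to $D(s,t)$.

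This bypasses $\Phi$ and Vieta entirely. It is exactly the paper's proof of Theorem~\ref{s1t1}, of which Theorem~\ref{main} is the case $s_1=t_1=1$.
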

Theorem~\ref{main} shows that the arithmetic condition
in Zhu's result is unnecessary for the DGS property.
Barranca~\cite{barranca2024} (see  also \cite{BarrancaBarrus2025}) constructed several infinite
families of double stars that are not DS, including
$D(2k,k+1)$ for every $k\ge 2$.
Combining these results with Theorem~\ref{main}, we
conclude that all these double stars are DGS but not DS.
Thus, we obtain several infinite families of graphs
answering the question of Lin et al.~\cite{Lin2026}.

We shall regard double star graphs as a special kind of chain graphs.  A graph is called a \emph{chain graph} if it is $\{2K_2,C_3,C_5\}$-free, i.e., it does not contain any of $2K_2, C_3$ and $C_5$ as an induced subgraph.  A key property of the class of chain graphs is that it is closed under generalized cospectrality \cite{Wang2026}; that is, every graph generalized cospectral with a chain graph is itself a chain graph.  

We note that a chain graph has at most one nontrivial connected
component, since otherwise it would contain $2K_2$ as an induced
subgraph. As we shall show in Proposition~\ref{prop:isolated-vertices},
generalized cospectral chain graphs have the same number of isolated
vertices, and after deleting these isolated vertices, their nontrivial
components remain generalized cospectral. Consequently, the generalized
spectral characterization of chain graphs can be reduced to the
connected case. This contrasts with the ordinary cospectrality. For example, the two chain graphs $K_{1,4}$ and $C_4\cup K_1$ are cospectral, but only the latter graph contains isolated vertices.

As observed in \cite{Bell2008}, the color classes of any nontrivial connected chain graph $G$ can be partitioned into $h$ non-empty cells $U_1, \dots, U_h$ and $V_1, \dots, V_h$ such that $N(u) = V_1 \cup \dots \cup V_{h-i+1}$ for any $u \in U_i$, $1 \le i \le h$. Such a graph is also called a \emph{double nested graph} (DNG). If $s_i = |U_i|$ and $t_i = |V_i|$, we write $G=\mathrm{DNG}(s_1, \dots, s_h; t_1, \dots, t_h)$ (see Fig.~\ref{dng}).
	\begin{figure}[htbp]
	\centering
	\includegraphics[width=0.35\textwidth]{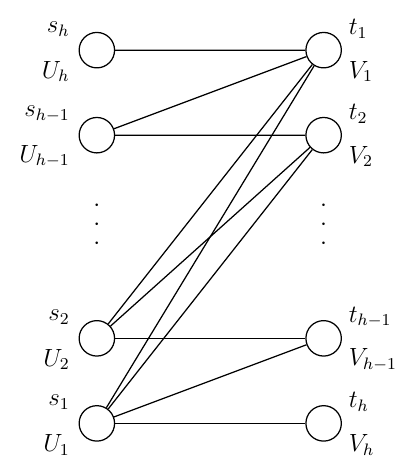}
	\caption{The chain graph $G=\mathrm{DNG}(s_1,\ldots,s_h;t_1,\ldots,t_h)$.}
	\label{dng}
\end{figure}
The case $h=1$ is trivial. Indeed, $\mathrm{DNG}(s,t)$ is the complete
bipartite graph, which is known to be DGS. We mainly consider the case
$h=2$, or equivalently, chain graphs of rank $4$ (see Lemma \ref{lem:rank-chain}). Our main structural result is the following.
\begin{theorem}\label{atmost1}
	Let $G=\mathrm{DNG}(s_1,s_2;t_1,t_2)$. Then $G$ has at most one generalized cospectral mate.
\end{theorem}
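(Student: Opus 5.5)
By the closure property of \cite{Wang2026}, every graph $H$ generalized cospectral with $G=\mathrm{DNG}(s_1,s_2;t_1,t_2)$ is a chain graph. Our plan is to pin down $H$ as a rank-$4$ DNG and then show that generalized cospectrality leaves at most one admissible choice of its parameters other than those of $G$.

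\textbf{Step 1: $H$ is a connected rank-$4$ DNG.} The rank is a spectral invariant, since it counts the nonzero eigenvalues. Hence $H$ has rank $4$, and by Lemma~\ref{lem:rank-chain} it is a DNG with $h=2$ together with possibly some isolated vertices. Proposition~\ref{prop:isolated-vertices} then shows that $H$ has no isolated vertices, because $G$ has none. So $H=\mathrm{DNG}(s_1',s_2';t_1',t_2')$ with $s_1'+s_2'+t_1'+t_2'=n$, and the question reduces to comparing parameter quadruples.

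\textbf{Step 2: translate generalized cospectrality into equations.} For a chain graph the characteristic polynomial is $x^{n-4}(x^4-e\,x^2+q)$. Here $e$ is the number of edges,
\[
e=s_1(t_1+t_2)+s_2t_1 ,
\]
and $q$ is the number of induced $2K_2$-free pairs of disjoint edges spanning a $C_4$, equivalently the sum of the squared $2\times2$ minors of the biadjacency matrix. For the quotient matrix of this chain this gives $q=s_1s_2t_1t_2$. Cospectrality of the complements is, by the standard criterion, equivalent to $\mathbf{1}^T(xI-A)^{-1}\mathbf{1}$ coinciding. We will compute this generating function from the $4\times4$ equitable quotient, using the walk counts $\mathbf{1}^TA^k\mathbf{1}$ for $k=0,1,2,3$, which suffice by the Cayley--Hamilton recursion of a rank-$4$ matrix. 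Alternatively we can use the characteristic polynomial of the complement directly.

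This yields four polynomial equations in $(s_1',s_2',t_1',t_2')$:
\begin{itemize}
\item equal $n$;
\item equal $e$;
\item equal $q=s_1s_2t_1t_2$;
\item equal degree sum of squares, $\sum d_v^2$ (walks of length $2$), together with equal walks of length $3$.
\end{itemize}
The degree-square sum splits into the two colour classes as $s_1(t_1+t_2)^2+s_2t_1^2+t_1(s_1+s_2)^2+t_2s_1^2$.

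\textbf{Step 3: eliminate variables.} We will use the linear and bilinear relations to express three unknowns in terms of one, say $t_1'$. We expect a cubic that, after factoring out the known solution corresponding to $G$ itself (or its colour-swapped copy $\mathrm{DNG}(t_2,t_1;s_2,s_1)$, which is isomorphic to $G$), collapses to a quadratic in that parameter. This step is the main obstacle. We would first try substituting $a=s_1+s_2$, $b=t_1+t_2$ and the edge count, which reduces the degree-square and $3$-walk identities to symmetric expressions. We then look for the factor corresponding to the trivial solution.

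\textbf{Step 4: count the solutions.} A quadratic has at most two roots. One root (or the pair of roots related by the colour-class symmetry) corresponds to $G$ itself. Hence at most one further positive integer solution exists, up to the isomorphism swapping colour classes. This gives at most one generalized cospectral mate, and also shows that the mate can be reconstructed explicitly from the other root.
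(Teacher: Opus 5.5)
Your Steps 1 and 2 agree with the paper. $H$ is a connected rank-$4$ DNG by Lemma~\ref{lem:chain-closed} and Proposition~\ref{prop:isolated-vertices}. Generalized cospectrality then forces equality of $n$, $m$, $c=s_1s_2t_1t_2$ and $\sum_v d_v^2=nm-b$; the $3$-walk count $2(m^2-c)$ adds nothing new.

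The gap is Step 3, which is the whole content of the theorem and is never carried out. You only say you \emph{expect} a cubic that collapses to a quadratic. The counting in Step 4 also does not follow from the picture you describe. If the root belonging to $G$ is factored out of a cubic, the remaining quadratic may have two roots, each a potentially different mate. You would have to show these belong to one graph, yet Step 4 assigns one of them to $G$ a second time. Your parameter does not fit this picture either. The colour swap of $\mathrm{DNG}(s_1,s_2;t_1,t_2)$ is $\mathrm{DNG}(t_1,t_2;s_1,s_2)$, not $\mathrm{DNG}(t_2,t_1;s_2,s_1)$, which in general does not even have $m$ edges. Hence $G$ alone contributes the two values $t_1'\in\{t_1,s_1\}$, and a mate would generically contribute two more, so the eliminant in $t_1'$ is generically a quartic. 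Finally, you never show that a value of the parameter determines at most one solution class. Recovering the other unknowns involves quadratics, not just linear and bilinear relations.

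The missing idea is to eliminate in a quantity that is invariant under the colour swap. The paper takes $\xi=s_2't_2'$. Then:
\begin{itemize}
\item $c$ and $b$ give $s_1'+t_1'=b/\xi$ and $s_1't_1'=c/\xi$;
\item $n$ gives $s_2'+t_2'=n-b/\xi$;
\item $m$ gives $s_1't_2'+s_2't_1'=m-c/\xi$.
\end{itemize}
Now square $(s_1'-t_1')(s_2'-t_2')=(s_1'+t_1')(s_2'+t_2')-2(s_1't_2'+s_2't_1')$ and clear denominators. This yields $\xi^2\Phi(\xi)=0$ with $\Phi$ quadratic of leading coefficient $m^2-4c>0$. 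Because $\xi$ is swap-invariant, $G$ accounts for exactly one root, $\xi_0=s_2t_2$.

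Each root of $\Phi$ fixes the unordered pairs $\{s_1',t_1'\}$ and $\{s_2',t_2'\}$. The assignment within them is forced by the $m$-equation, since the two choices differ by $(s_1'-t_1')(t_2'-s_2')$. When $s_1'=t_1'$, the two choices are colour swaps of each other.

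Your $t_1'$-route can be repaired. The relation $s_1't_1'/(s_1'+t_1')=c/b$ makes $s_1'$ a M\"obius function of $t_1'$ and gives $s_2't_2'=(bt_1'-c)/t_1'^2$. But the resulting quartic is just $\Phi$ pulled back, with roots pairing up as $\{s_1',t_1'\}$, so the same computation is unavoidable.
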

We shall give an explicit way to recover the only possible generalized
cospectral mate from four generalized spectral invariants. This also yields
a simple sufficient condition for generalized spectral determination.
\begin{theorem}\label{s1t1}
	If $s_1=t_1\in\{1,2\}$, then
	$\mathrm{DNG}(s_1,s_2;t_1,t_2)$ is DGS.
\end{theorem}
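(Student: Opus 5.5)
The plan is to combine the closure property of chain graphs with the explicit generalized spectral invariants that underlie Theorem~\ref{atmost1}. Let $G=\mathrm{DNG}(s_1,s_2;t_1,t_2)$ with $s_1=t_1=a\in\{1,2\}$, and let $H$ be generalized cospectral with $G$.

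\textbf{Step 1: reduce $H$ to a rank-$4$ double nested graph.} By the closure result of \cite{Wang2026}, $H$ is a chain graph. By Proposition~\ref{prop:isolated-vertices}, $H$ has no isolated vertices, so it is connected. Rank is a spectral invariant, so Lemma~\ref{lem:rank-chain} gives $H=\mathrm{DNG}(s_1',s_2';t_1',t_2')$ with $h=2$. The relabeling that swaps the two colour classes gives $\mathrm{DNG}(s_1',s_2';t_1',t_2')\cong \mathrm{DNG}(t_1',t_2';s_1',s_2')$. So it suffices to show that $(s_1',s_2',t_1',t_2')$ equals $(s_1,s_2,t_1,t_2)$ or its swap $(t_1,t_2,s_1,s_2)$.

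\textbf{Step 2: write down the four invariant equations.} I will use the four invariants that the paper's necessary-and-sufficient equations are built from:
\begin{itemize}
\item the order $n=s_1+s_2+t_1+t_2$;
\item the size $m=s_1(t_1+t_2)+s_2t_1$, which equals $\lambda_1^2+\lambda_2^2$;
\item the product $\lambda_1^2\lambda_2^2=s_1s_2t_1t_2$, read off from the $2\times2$ quotient of $AA^T$ on the cells;
\item the walk count $e^TA^2e=s_1(t_1+t_2)^2+s_2t_1^2+t_1(s_1+s_2)^2+t_2s_1^2$.
\end{itemize}
The last one is a generalized spectral invariant, since the numbers $e^TA^ke$ are determined by the generalized spectrum. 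Equating these for $G$ and $H$ and eliminating, as in the proof of Theorem~\ref{atmost1}, leaves a single quadratic in one parameter, say $x=s_1'$ (or $x=s_1't_1'$). The remaining primed parameters are then rational functions of $x$ and of $s_1,s_2,t_1,t_2$.

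\textbf{Step 3: locate the two roots.} The original parameters, or their swap, furnish one root of the quadratic. By Vieta's formulas the other root is then an explicit expression in $s_1,s_2,t_1,t_2$. Substitute $s_1=t_1=a$ into this second root and into the reconstructed $s_2',t_1',t_2'$. I expect the second root to reduce to something like a number strictly between $0$ and $1$, a non-integer, or a value forcing some $s_i'$ or $t_i'$ to be $\le 0$. This is exactly where the hypothesis $a\le 2$ should bite: a cell of size $1$ or $2$ leaves no room for a genuinely different factorization compatible with the fixed product $s_1s_2t_1t_2$ and the fixed sum $n$. If instead the second root equals the first, or corresponds to the swapped tuple, then $H\cong G$ again.

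\textbf{Main obstacle.} The hard step is Step~3, namely turning the Vieta conjugate root into a clean expression and showing it is inadmissible for every $s_2,t_2\ge1$. I would first try the direct route: treat $a=1$ and $a=2$ separately, and show the conjugate value of $s_1'$ (or of $s_1't_1'$) lies strictly between two consecutive feasible integers, or is below $1$. If that fails, the fallback is to exploit the relation $s_1's_2't_1't_2'=a^2s_2t_2$ together with integrality. Enumerating the few possibilities for $s_1'$ and $t_1'$ allowed by the small value $a^2\in\{1,4\}$ of $s_1t_1$, each should lead to a contradiction with the sum and degree-square equations.
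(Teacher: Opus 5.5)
Your Steps 1 and 2 match the paper's setup. Your $e^{T}A^{2}e$ equals $nm-b$, so your four invariants are equivalent to $n,m,c,b$. There is a genuine gap in Step 3, which carries the whole proof and which your plan does not resolve.

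\textbf{Why Step 3 fails as planned.} The Vieta partner of $\xi_0=s_2t_2$ is not inadmissible in general. Take the double star $D(19,5)=\mathrm{DNG}(1,19;1,5)$. Here $n=26$, $m=25$, $c=95$, $b=190$ and $\Phi(\xi)=245(\xi-19)(\xi-95)$, so $\xi_1=19$ is a positive integer. The reconstructed symmetric functions $x_1+y_1=10$, $x_1y_1=5$, $x_2+y_2=16$, $x_2y_2=19$ are also positive integers. In fact $(5-2\sqrt5,\,8-3\sqrt5;\,5+2\sqrt5,\,8+3\sqrt5)$ is a genuine positive real solution of all four equations.

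Consequently, no test based on the sign, size or integrality of the second root, or of the conjugate value of $s_1't_1'$, can succeed. Only the irrationality of the individual entries $x_1,y_1$ rules this solution out, and your plan gives no uniform way to prove that. Your fallback also rests on a false premise. The product $s_1t_1$ is not a generalized spectral invariant, so $a^2\in\{1,4\}$ places no a priori restriction on $s_1',t_1'$; in the example above, $s_1't_1'=5$. Your heuristic about ``no room for a different factorization'' likewise does not identify why $a\le 2$ matters.

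\textbf{The missing idea.} Look at the invariant ratio $b/c$. It depends only on the first cells of $H$:
\[
\frac{b}{c}=\frac{x_2y_2(x_1+y_1)}{x_1x_2y_1y_2}=\frac{1}{x_1}+\frac{1}{y_1}.
\]
For $G$ with $s_1=t_1=a$, this ratio equals $2/a$. Hence $(2x_1-a)(2y_1-a)=a^2$. For $a\in\{1,2\}$, the only positive integer solution is $x_1=y_1=a$. For $a=3$ the extra solution $\{2,6\}$ appears, which is exactly Example~\ref{ex:k3}; this explains the role of the hypothesis.

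Once $x_1=y_1=a$, you get $x_2y_2=c/a^2=s_2t_2$ and $x_2+y_2=n-2a=s_2+t_2$. Therefore $\{x_2,y_2\}=\{s_2,t_2\}$, and $H\cong G$. This is the paper's proof, and it bypasses the quadratic $\Phi$ and the Vieta step entirely.
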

Since $\mathrm{DNG}(1,s;1,t)$ is the double star $D(s,t)$,
Theorem~\ref{main} follows immediately from Theorem~\ref{s1t1}.

The rank-$4$ framework also produces examples beyond double stars.
An\dj eli\'c and Tura \cite{Andelic2025} showed that chain graphs are not, in
general, determined by their adjacency spectra and constructed infinite
families of cospectral chain graphs. A specialization of their construction,
combined with our generalized-cospectrality criterion, gives the following
infinite family of non-double-star graphs.
\begin{theorem}\label{family}
	For every integer $k\ge2$, the chain graph
	\[
	G_k:=\mathrm{DNG}(k,2;2k,k+1)
	\]
	is DGS but not DS.
\end{theorem}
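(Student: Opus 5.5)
Two things have to be shown: that $G_k=\mathrm{DNG}(k,2;2k,k+1)$ is DGS, and that it has a cospectral mate that is not isomorphic to it.

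For the DGS part I would use the explicit criterion announced before Theorem~\ref{atmost1}. Any graph generalized cospectral with $G_k$ is a chain graph, by the closure property from \cite{Wang2026}. By Proposition~\ref{prop:isolated-vertices} it has no isolated vertices, since $G_k$ has none, so it is connected. Its rank is $4$ by Lemma~\ref{lem:rank-chain}, because rank is a spectral invariant. Hence every mate has the form $\mathrm{DNG}(s_1',s_2';t_1',t_2')$, and its parameters must satisfy the four equations in the cell parameters.

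The four invariants I expect to use are these:
\begin{itemize}
\item the number of vertices, $s_1+s_2+t_1+t_2$;
\item the number of edges, $s_1(t_1+t_2)+s_2t_1$;
\item the number of $4$-cycles, or equivalently $\sum\lambda^4$;
\item one further walk-count invariant that comes from the complement, for example $\mathbf{1}^TA^2\mathbf{1}=\sum_v d(v)^2$.
\end{itemize}
I would substitute $(s_1,s_2,t_1,t_2)=(k,2,2k,k+1)$ and eliminate as the paper does, down to the single quadratic in one parameter. One root gives $G_k$ back. I would then show that the other root does not give a valid nontrivial mate. This means checking that the resulting value is not a positive integer, or that some cell size comes out nonpositive or non-integral, or that the resulting DNG is isomorphic to $G_k$ through the swap of colour classes $(s_1,s_2;t_1,t_2)\leftrightarrow(t_1,t_2;s_1,s_2)$.

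This step is the main obstacle. The second root is presumably a rational function of $k$, and proving it is never an admissible integer for all $k\ge2$ will need a divisibility or size argument. I would try computing it explicitly first; it may factor nicely given how special the family is. If it does not, I would bound it strictly between consecutive integers, or show that the companion cell sizes become negative. The same check has to be done with the roles of the two colour classes exchanged.

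For the non-DS part I would follow the construction of An\dj eli\'c and Tura \cite{Andelic2025}. The characteristic polynomial of a rank-$4$ chain graph is $x^{n-4}(x^4-ex^2+c)$, where $e$ is the number of edges and $c=s_1s_2t_1t_2$, since $c$ is the determinant of the $2\times2$ biadjacency quotient squared. For $G_k$ we get $n=3k+3$, $e=k(3k+1)+4k=3k^2+5k$ and $c=k\cdot2\cdot2k(k+1)=4k^2(k+1)$. I would then exhibit a specific graph with the same spectrum, presumably a disconnected one such as a rank-$4$ chain graph plus isolated vertices, or some non-chain graph from their construction. It needs the same $n$, the same $e$ and the same $c$, for instance $\mathrm{DNG}(a,b;c',d)\cup mK_1$ with matching parameters. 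I would find it by solving $ac'+ad+bc'=3k^2+5k$ and $abc'd=4k^2(k+1)$ with total order at most $3k+3$. Since the mate has isolated vertices and $G_k$ does not, they are not isomorphic, so $G_k$ is not DS. Finally I would check that $G_k$ and its complement are both connected, as the introduction claims.
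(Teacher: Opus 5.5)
Both halves have gaps, and the more serious one is in the non-DS part. First, $|V(G_k)|=k+2+2k+(k+1)=4k+3$, not $3k+3$, so your search is set up for the wrong order, and the same value would enter $\Phi$ in your DGS computation. More importantly, you never exhibit a cospectral mate. The shape you guess, a rank-$4$ chain graph plus isolated vertices that is distinguished from $G_k$ by those isolated vertices, does not exist in general. For $k=2$ we have $n=11$ and $c=48$, and any four positive integers with product $48$ have sum at least $2+2+3+4=11$, so such a mate has no isolated vertices at all. For $k=3$ we have $n=15$, $m=42$, $c=144$; the only quadruple with product $144$ and sum at most $14$ is $\{3,3,4,4\}$, and no arrangement of it gives $42$ edges. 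The missing idea is the An\dj eli\'c--Tura mate itself, which is connected. Take $\ell=2$, $t_1=k$, $t_2=2$, $s_1=k+1$ in Lemma~\ref{lem:AT_construction}. This gives $H_k=\mathrm{DNG}(k+1,2;k,2k)$ with the same $n=4k+3$, $m=3k^2+5k$, $c=4k^2(k+1)$, so $H_k$ is cospectral with $G_k$ by Lemma~\ref{lem:rank4_cospectral}. Non-isomorphism then needs a different argument. For example, $b(H_k)-b(G_k)=2k(k-1)\neq0$, so the two graphs are not even generalized cospectral. Alternatively, compare the colour-class sizes $\{k+2,3k+1\}$ and $\{k+3,3k\}$.

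The DGS half follows the paper's route: closure of chain graphs, cancellation of isolated vertices, rank, and the quadratic $\Phi$. Your invariants are equivalent to $n,m,c,b$ via $\sum\lambda^4=2m^2-4c$ and $\sum_v d(v)^2=nm-b$. But you stop exactly at the step that carries the content. You need the explicit second root
\[
\xi_1=\frac{2k(k+1)(7k+27)}{9k^2+14k+9}
\]
and a proof that it is never an integer for $k\ge2$. Trapping $\xi_1$ between consecutive integers cannot work uniformly, because $\xi_1\sim 14k/9$ grows linearly. The paper instead sets $Q=9k^2+14k+9$, $P=14k^2+68k+54$ and $g=\gcd(k,9)$. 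It uses $\gcd(k,Q)=\gcd(k,9)$ to reduce integrality to $(Q/g)\mid P$. It then bounds the quotient $P/(Q/g)$ to a few values and rules each out by a non-square discriminant. Your extra check with the colour classes exchanged is unnecessary, since $\xi=x_2y_2$ is invariant under that swap.
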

Thus the chain-graph approach not only settles the generalized spectral
problem for double stars, but also provides a new infinite family of DGS but
non-DS graphs within the same rank-$4$ class.

The rest of the paper is organized as follows. In Section~2, we recall
basic facts on equitable partitions, main eigenvalues, and walk generating
functions, and prove that isolated vertices can be cancelled under
generalized cospectrality for chain graphs. In Section~3, we characterize
generalized cospectrality of connected rank-$4$ chain graphs by four explicit
parameter equations. Section~4 reduces this system to a single quadratic
equation and proves Theorem~\ref{atmost1}. In Section~5, we derive the
sufficient condition in Theorem~\ref{s1t1} and prove Theorem~\ref{main} on
double stars. In Section~6, we apply the same framework to the family $G_k$
and prove Theorem~\ref{family}.

\section{Preliminaries}

In this section, we recall some basic facts about equitable partitions,
main eigenvalues, and walk generating functions. We then show that the
generalized spectral characterization of chain graphs can be reduced to
the connected case.

Let $\Pi=\{C_1,\ldots,C_p\}$ be a partition of the vertex set of a graph
$G$. The partition $\Pi$ is called \emph{equitable} if, for every
$i,j\in\{1,\ldots,p\}$, each vertex in $C_i$ has the same number of
neighbors in $C_j$. The corresponding \emph{divisor matrix}
$D=(d_{ij})$ is defined by letting $d_{ij}$ be the number of neighbors
in $C_j$ of any vertex in $C_i$. We refer to
\cite{Cvetkovic2010} for basic results on equitable partitions,
divisors, and main eigenvalues.

Let
\[
G=\mathrm{DNG}(s_1,\ldots,s_h;t_1,\ldots,t_h)
\]
be a connected chain graph of order
\[
n=\sum_{i=1}^{h}(s_i+t_i).
\]
By the definition of a chain graph, the partition
\[
\Pi=\{U_1,\ldots,U_h,V_1,\ldots,V_h\}
\]
is equitable. Let $D$ be the corresponding divisor matrix. It is known
\cite{Andelic2018,Andelic2025} that
\begin{equation}\label{eq:char_divisor}
	\chi(x,G)=x^{\,n-2h}\det(xI_{2h}-D).
\end{equation}
Moreover, a direct calculation gives
\[
\det D=(-1)^h\prod_{i=1}^{h}s_it_i\neq 0.
\]
\begin{lemma}\label{lem:rank-chain}
	Let $G=\mathrm{DNG}(s_1,\ldots,s_h;t_1,\ldots,t_h)$ be a connected
	chain graph. Then
	\[
	\operatorname{rank} A(G)=2h.
	\]
	Consequently, $G$ has rank $4$ if and only if $h=2$.
\end{lemma}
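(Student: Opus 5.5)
The plan is to read the rank off the characteristic polynomial. Since $A(G)$ is real symmetric, it is diagonalizable, so its rank equals $n$ minus the algebraic multiplicity of $0$ as a root of $\chi(x,G)$. By \eqref{eq:char_divisor},
\[
\chi(x,G)=x^{\,n-2h}\det(xI_{2h}-D).
\]
So it suffices to show that $0$ is not a root of $\det(xI_{2h}-D)$. Evaluating at $x=0$ gives $\det(-D)=(-1)^{2h}\det D=\det D$, and the stated formula $\det D=(-1)^h\prod_{i=1}^h s_it_i$ is nonzero because every cell is non-empty. Hence the multiplicity of the eigenvalue $0$ is exactly $n-2h$, and $\operatorname{rank}A(G)=2h$.

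If one wants to avoid relying on \eqref{eq:char_divisor}, there is a direct argument. Vertices in the same cell $U_i$ have identical rows in $A(G)$, and likewise for each $V_j$. Thus $\operatorname{rank}A(G)$ equals the rank of the $2h\times 2h$ quotient matrix $Q$, obtained by keeping one row and one column per cell. This $Q$ has the block form $\begin{pmatrix}0&B\\B^T&0\end{pmatrix}$, where $B_{ij}=1$ if and only if $i+j\le h+1$. In particular, $B$ is an anti-triangular $0/1$ matrix with ones on the anti-diagonal, so $\det B=\pm1$. Therefore $\operatorname{rank}Q=2\operatorname{rank}B=2h$.

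The only point needing care is the identification of rank with $n$ minus the multiplicity of $0$, which uses the symmetry (diagonalizability) of $A(G)$. The verification of $\det D$ is also routine: $D$ has the same anti-triangular block shape with the cell sizes as entries, and its determinant is a signed product of the anti-diagonal entries.

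The consequence is immediate. Since $h\ge1$ is a positive integer, $2h=4$ holds if and only if $h=2$.
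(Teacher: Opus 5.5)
Your main argument is the paper's proof: from $\chi(x,G)=x^{n-2h}\det(xI_{2h}-D)$ and $\det D=(-1)^h\prod_{i} s_it_i\neq 0$, the nullity is $n-2h$. You also make explicit the diagonalizability step the paper leaves implicit, and your alternative argument (identical rows within cells reducing $A(G)$ to the quotient matrix with anti-triangular $B$, $\det B=\pm1$) is likewise correct and avoids relying on the characteristic-polynomial formula.
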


\begin{proof}
	By \eqref{eq:char_divisor},
	\[
	\chi(x,G)=x^{n-2h}\det(xI_{2h}-D).
	\]
	Since
	\[
	\det D=(-1)^h\prod_{i=1}^{h}s_it_i\neq0,
	\]
	the nullity of $A(G)$ is $n-2h$. Hence $\operatorname{rank} A(G)=2h.$
\end{proof}
Recall that an eigenvalue $\lambda$ of a graph $G$ is called
\emph{main} if its eigenspace is not orthogonal to the all-one vector
$\mathbf{1}$. A standard property of equitable partitions states that
every main eigenvalue of $G$ is an eigenvalue of its divisor matrix
$D$; see, for example, \cite[Section~3.9]{Cvetkovic2010}. Since the divisor matrix of a connected chain graph is nonsingular, we obtain the following characterization
of isolated vertices in terms of main eigenvalues.
\begin{lemma}\label{lem:zero-main}
	Let $G$ be a chain graph. Then $0$ is a main
	eigenvalue of $G$ if and only if $G$ has an
	isolated vertex. Consequently, a chain graph
	with isolated vertices cannot be generalized
	cospectral with a chain graph having none.
\end{lemma}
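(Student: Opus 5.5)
We prove the two implications of the first assertion separately and then deduce the consequence from the fact that main eigenvalues are a generalized spectral invariant. A chain graph $G$ has at most one nontrivial connected component, so $G$ is either edgeless or of the form $H\cup mK_1$ with $H=\mathrm{DNG}(s_1,\ldots,s_h;t_1,\ldots,t_h)$ connected and $m\ge 0$ isolated vertices. The edgeless case is immediate: $G=\overline{K_n}$ with $n\ge1$ has $A=0$, so $\mathbf{1}$ is an eigenvector for $0$. We may therefore assume $G=H\cup mK_1$.

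\emph{If $m\ge1$:} let $w$ be the characteristic vector of an isolated vertex. Then $A(G)w=0$ and $w^{T}\mathbf{1}=1\neq0$. Hence the eigenspace of $0$ is not orthogonal to $\mathbf{1}$, and $0$ is a main eigenvalue.

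\emph{If $m=0$:} then $G=H$ is connected with the equitable partition $\Pi$. By the standard fact recalled above, every main eigenvalue of $G$ is an eigenvalue of the divisor matrix $D$. Since $\det D=(-1)^h\prod s_it_i\neq0$, $0$ is not an eigenvalue of $D$, so $0$ is not main. (To spell out the standard fact: with $P$ the characteristic matrix of $\Pi$, we have $AP=PD$, and $\mathbf 1=P\mathbf 1_{2h}$ lies in the column space of $P$, which is $A$-invariant. Therefore the projection of $\mathbf 1$ onto each eigenspace of $A$ lies in that invariant subspace, and its eigenvalue must be one of $D$.)

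For the consequence, we use the well-known fact (Cvetković; Johnson–Newman) that $G$ and $H$ are generalized cospectral if and only if they are cospectral and have the same main eigenvalues. Indeed, $\chi(x,\overline G)$ is determined by $\chi(x,G)$ together with the walk generating function $\mathbf 1^T(xI-A)^{-1}\mathbf 1=\sum_\lambda \|P_\lambda\mathbf 1\|^2/(x-\lambda)$, whose poles are exactly the main eigenvalues. Concretely, $\chi(x,\overline G)=(-1)^n\chi(-x-1,G)\bigl(1-W_G(-x-1)\bigr)$ with $W_G(x)=\mathbf 1^T(xI-A)^{-1}\mathbf 1$, so generalized cospectral graphs have identical $W$. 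Applying the equivalence just proved, a chain graph with an isolated vertex has $0$ as a main eigenvalue while one without does not, so the two cannot be generalized cospectral.

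The only delicate point is justifying that main eigenvalues are determined by the generalized spectrum. I would cite \cite{Cvetkovic2010} for this rather than reprove it.
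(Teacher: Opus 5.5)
Your proof is correct and is essentially the paper's argument. An isolated vertex $v$ gives $A e_v=0$ with $\mathbf 1^{\top}e_v=1$. The absence of isolated vertices forces $G$ to be a connected $\mathrm{DNG}$, whose main eigenvalues are eigenvalues of the nonsingular divisor matrix $D$. The consequence then follows because generalized cospectral graphs have the same main eigenvalues.

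One caution: the equivalence you quote is false in the reverse direction. Cospectrality plus equal main eigenvalues does not give generalized cospectrality; equal main angles are needed. For instance, the paper's cospectral pair $\mathrm{DNG}(2,3;6,4)$ and $\mathrm{DNG}(4,3;2,6)$ shares all four main eigenvalues $\pm\sqrt{19\pm\sqrt{217}}$ but has different $b$. You only use the forward direction, however, and your walk-generating-function argument does establish it. There is also a harmless sign slip: with your normalization, the complement formula should read $(-1)^n\chi(-x-1,G)\bigl(1+W_G(-x-1)\bigr)$.
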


\begin{proof}
	Suppose first that $G$ has no isolated vertices.
	Since a chain graph has at most one nontrivial
	connected component, $G$ is connected.
	Let $D$ be the divisor matrix associated with
	its canonical equitable partition.
	As every main eigenvalue of $G$ is an eigenvalue
	of $D$ and $\det D\neq 0$, it follows that
	$0$ cannot be a main eigenvalue of $G$.
	
	Conversely, suppose that $G$ has an isolated
	vertex $v$, and let $e_v$ be the corresponding
	standard basis vector. Then
	\[
	A(G)e_v=0,
	\qquad
	\mathbf{1}^{\top}e_v=1.
	\]
	Thus $0$ is a main eigenvalue of $G$.
	
	The final assertion follows immediately, since
	generalized cospectral graphs have the same
	main eigenvalues.
\end{proof}
We next recall a useful characterization of generalized cospectrality.
For a graph $G$ with adjacency matrix $A$, let
\[
N_k(G)=\mathbf{1}^{\top}A^k\mathbf{1}
\]
be the total number of walks of length $k$ in $G$, and define its
\emph{walk generating function} by
\begin{equation*}
	W_G(z)
	=\sum_{k=0}^{\infty}N_k(G)z^k
	=\mathbf{1}^{\top}(I-zA)^{-1}\mathbf{1}.
\end{equation*}
It is well known that two cospectral graphs $G$ and $H$ are generalized
cospectral if and only if
\begin{equation*}
	W_G(z)=W_H(z);
\end{equation*}
see \cite{Cvetkovic2010}. Equivalently, generalized cospectral graphs
have the same eigenvalues together with the same main angles, and in
particular they have the same main eigenvalues.

The walk generating function behaves particularly well with respect to
disjoint unions. Namely,
\begin{equation}\label{eq:walk-union}
	W_{G_1\cup G_2}(z)
	=
	W_{G_1}(z)+W_{G_2}(z).
\end{equation}
This gives the following useful cancellation property.

\begin{lemma}\label{lem:component-cancellation}
	Let $G$ and $H$ be generalized cospectral graphs. Suppose that $G_1$
	and $H_1$ are connected components of $G$ and $H$, respectively, such
	that
	\[
	G_1\cong H_1.
	\]
	Then
	\[
	G-G_1
	\quad\text{and}\quad
	H-H_1
	\]
	are generalized cospectral.
\end{lemma}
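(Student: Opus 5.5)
The plan is to derive both halves of generalized cospectrality for $G-G_1$ and $H-H_1$ from two facts: the characteristic polynomial is multiplicative over disjoint unions, and the walk generating function is additive, as in \eqref{eq:walk-union}. We also use the characterization recalled above: two graphs are generalized cospectral if and only if they are cospectral and have the same walk generating function.

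For cospectrality, write $G=G_1\cup G'$ and $H=H_1\cup H'$, where $G'=G-G_1$ and $H'=H-H_1$. Then $\chi(x,G)=\chi(x,G_1)\chi(x,G')$ and $\chi(x,H)=\chi(x,H_1)\chi(x,H')$. By hypothesis $\chi(x,G)=\chi(x,H)$, and since $G_1\cong H_1$ we have $\chi(x,G_1)=\chi(x,H_1)$. This is a monic, hence nonzero, polynomial. Cancelling it in the integral domain $\mathbb{Z}[x]$ gives $\chi(x,G')=\chi(x,H')$.

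For the walk generating functions, \eqref{eq:walk-union} gives $W_G=W_{G_1}+W_{G'}$ and $W_H=W_{H_1}+W_{H'}$. The hypotheses give $W_G=W_H$, and isomorphism gives $W_{G_1}=W_{H_1}$. Subtracting yields $W_{G'}=W_{H'}$ as formal power series. Combined with the cospectrality just shown, the characterization implies that $G'$ and $H'$ are generalized cospectral.

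Two small points need attention. First, if $G'$ is empty, then so is $H'$, because the orders agree via the degrees of the characteristic polynomials; the statement is then trivial, or we adopt the convention $\chi=1$ and $W=0$. Second, for the additivity \eqref{eq:walk-union} I would note that $A(G_1\cup G_2)$ is block diagonal, so $\mathbf 1^\top A^k\mathbf 1$ splits as a sum over the blocks.

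There is no real obstacle. The only step needing care is the justification of the cancellation for $\chi$, which rests on the polynomial being monic.
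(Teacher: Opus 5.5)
Your proposal is correct and follows the paper's proof: cancel $\chi(x,G_1)=\chi(x,H_1)$ from the factored characteristic polynomials, subtract $W_{G_1}=W_{H_1}$ using the additivity \eqref{eq:walk-union}, and then invoke the walk-generating-function characterization of generalized cospectrality. Your remarks on why the monic factor can be cancelled and on the empty-remainder case are small points the paper leaves implicit.
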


\begin{proof}
	Write
	\[
	G=G_1\cup G_2,
	\qquad
	H=H_1\cup H_2.
	\]
	Since $G$ and $H$ are cospectral and $G_1\cong H_1$, we have
	\[
	\chi(x,G_1)\chi(x,G_2)
	=
	\chi(x,H_1)\chi(x,H_2),
	\]
	and hence
	\[
	\chi(x,G_2)=\chi(x,H_2).
	\]
	Thus $G_2$ and $H_2$ are cospectral.
	
	Since $G$ and $H$ are generalized cospectral,
	\[
	W_G(z)=W_H(z).
	\]
	By \eqref{eq:walk-union} and $G_1\cong H_1$, we obtain
	\[
	\begin{aligned}
		W_{G_2}(z)
		&=W_G(z)-W_{G_1}(z)\\
		&=W_H(z)-W_{H_1}(z)\\
		&=W_{H_2}(z).
	\end{aligned}
	\]
	Therefore $G_2$ and $H_2$ are cospectral and have the same walk
	generating function. Hence they are generalized cospectral.
\end{proof}

We shall also use the recent result that the class of chain graphs is
closed under generalized cospectrality.

\begin{lemma}[\cite{Wang2026}]\label{lem:chain-closed}
	If a graph $G$ is a chain graph and $H$ is generalized cospectral with
	$G$, then $H$ is also a chain graph.
\end{lemma}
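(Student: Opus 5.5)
The plan is a counting argument: show that the generalized spectrum, namely the ordinary spectrum together with the walk counts $N_k$, fixes the number of triangles, the odd cycles, and the number of induced copies of $2K_2$. For a chain graph all of these are zero, so they must also be zero for $H$.

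\textbf{Odd cycles.} Since $H$ is cospectral with $G$, the two spectra agree. The graph $G$ is bipartite because it is triangle-free and $2K_2$-free and has no induced $C_5$; by the standard argument, a shortest odd cycle is induced, and an induced odd cycle of length at least $7$ contains an induced $2K_2$. Hence the spectrum of $G$ is symmetric about $0$. By the classical characterization (a graph is bipartite if and only if its spectrum is symmetric), $H$ is bipartite. Therefore $H$ contains no $C_3$ and no $C_5$, induced or otherwise.

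\textbf{Invariants.} It remains to show that $H$ is $2K_2$-free. Let $m$ be the number of edges and $d_v$ the degrees. I will use the following quantities, each determined by the generalized spectrum:
\begin{itemize}
\item $2m=\operatorname{tr}A^2$;
\item $\sum_v d_v^2=N_2$;
\item $\sum_{uv\in E} d_ud_v=\tfrac12 N_3$;
\item $\operatorname{tr}A^4=2m+4\sum_v\binom{d_v}{2}+8q$, where $q$ is the number of $4$-cycles as subgraphs, so $q$ is determined;
\item the number of triangles, $t=\tfrac16\operatorname{tr}A^3$, which is $0$ for both graphs.
\end{itemize}

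\textbf{Counting induced $2K_2$.} Count unordered pairs of vertex-disjoint edges. There are
\[
\binom{m}{2}-\sum_v\binom{d_v}{2}
\]
of them. In a triangle-free graph, the four endpoints of such a pair induce one of three graphs.
\begin{itemize}
\item $2K_2$: this contributes one disjoint pair.
\item $P_4$: this contributes one disjoint pair, the two end edges.
\item $C_4$: this contributes two disjoint pairs. In the bipartite case the endpoints induce no $K_4^-$ or $K_4$.
\end{itemize}
The number of $P_4$ subgraphs (paths on three edges) is
\[
\sum_{uv\in E}(d_u-1)(d_v-1)-3t,
\]
which is determined by $N_3$, $N_2$, $m$ and $t=0$. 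Each $4$-cycle contains exactly $4$ such paths. So the number of induced $P_4$'s equals the number of $P_4$ subgraphs minus $4q$. Combining these,
\[
\#\{\text{induced }2K_2\}=\binom{m}{2}-\sum_v\binom{d_v}{2}-\Bigl(\sum_{uv\in E}(d_u-1)(d_v-1)-4q\Bigr)-2q .
\]
Every term on the right is a function of the generalized spectral invariants listed above. Hence this count is the same for $G$ and $H$, and since it is $0$ for $G$ it is $0$ for $H$. Thus $H$ is $\{2K_2,C_3,C_5\}$-free, i.e., a chain graph.

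\textbf{Main obstacle.} The delicate step is the case bookkeeping in the count of induced $2K_2$: making sure every induced subgraph on four vertices spanned by two disjoint edges falls into one of the three cases, and getting the multiplicities of paths inside $4$-cycles right. Triangle-freeness, which follows from $\operatorname{tr}A^3$ and from bipartiteness, is what rules out the extra cases.
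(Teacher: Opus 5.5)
This lemma is not proved in the paper. It is imported from \cite{Wang2026} and used as a black box, so there is no in-paper argument to compare yours with. Your proof is correct and self-contained.

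The bipartiteness step is sound: a symmetric spectrum forces every odd trace of $A(H)$ to vanish. The counting is also right. In a triangle-free graph the four endpoints of two disjoint edges induce $2K_2$, $P_4$ or $C_4$, contributing $1$, $1$ and $2$ disjoint pairs respectively. Every $4$-cycle is induced and contains exactly four paths $P_4$. You also establish that $H$ is triangle-free before applying the formula to $H$, which is essential.

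What your route adds over the citation is clarity about which invariants are actually needed. Substitute $\sum_v\binom{d_v}{2}=\tfrac12(N_2-2m)$, the path count $\tfrac12N_3-N_2+m$, and $8q=\operatorname{tr}A^4-2N_2+2m$. The $N_2$ terms cancel, and for every triangle-free graph your count of induced $2K_2$'s collapses to
\[
\tfrac14\bigl(2m^2-2N_3+\operatorname{tr}A^4\bigr).
\]
So, apart from the ordinary spectrum (which already gives bipartiteness, $m$ and $\operatorname{tr}A^4$), the only generalized-spectral input you need is $N_3$.

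As a consistency check, the rank-$4$ chain graphs of Section~\ref{sec:rank4} have $N_3=2(m^2-c)$ and $\operatorname{tr}A^4=2(m^2-2c)$. For these the expression vanishes, as it should.
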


We are now ready to show that isolated vertices play no essential role
in the generalized spectral characterization of chain graphs.

\begin{proposition}\label{prop:isolated-vertices}
	Let $G$ and $H$ be generalized cospectral chain graphs, each having at
	least one edge. Then $G$ and $H$ have the same number of isolated
	vertices. More precisely, if
	\[
	G=G_0\cup kK_1
	\]
	where $G_0$ is a connected chain graph, then
	\[
	H=H_0\cup kK_1
	\]
	for some connected chain graph $H_0$, and $G_0$ and $H_0$ are
	generalized cospectral.
\end{proposition}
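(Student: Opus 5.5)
By Lemma~\ref{lem:chain-closed}, $H$ is a chain graph. A chain graph has at most one nontrivial connected component. Since $G$ and $H$ have the same number of edges (cospectral graphs share $\tfrac12\mathrm{tr}A^2$), and $G$ has at least one edge, $H$ also has an edge. We may therefore write $H=H_0\cup mK_1$ with $H_0$ a connected chain graph on at least two vertices and $m\ge 0$. Similarly $G_0$ is nontrivial. Cospectral graphs have the same order, so $|V(G_0)|+k=|V(H_0)|+m$. The task is to show $k=m$; once this holds, $G_0$ and $H_0$ have the same order, and we cancel the isolated vertices.

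The plan is to read off the number of isolated vertices from the walk generating function. For a disjoint union, \eqref{eq:walk-union} gives
\[
W_G(z)=W_{G_0}(z)+k,\qquad W_H(z)=W_{H_0}(z)+m,
\]
since $W_{K_1}(z)=1$. Write $W_G(z)=\sum_\mu \frac{\|P_\mu\mathbf 1\|^2}{1-\mu z}$, where $P_\mu$ is the orthogonal projection onto the $\mu$-eigenspace. The term with $\mu=0$ is then a constant equal to $\|P_0\mathbf 1\|^2$, and it is nonzero only if $0$ is main. Every other main eigenvalue contributes a genuinely nonconstant rational function. Since the functions $1/(1-\mu z)$ for distinct $\mu$, together with the constant $1$, are linearly independent, the constant part of $W_G$ in this partial fraction decomposition equals the main-angle contribution of $0$, namely $\|P_0\mathbf 1\|^2$.

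Next we compute this quantity for $G$ in terms of $k$. By Lemma~\ref{lem:zero-main} applied to the connected chain graph $G_0$ (no isolated vertices), $0$ is not a main eigenvalue of $G_0$, so $\mathbf 1_{G_0}$ is orthogonal to $\ker A(G_0)$. The kernel of $A(G)$ is $\ker A(G_0)\oplus\mathbb R^{k}$, so $P_0\mathbf 1$ is exactly the indicator vector of the isolated vertices, and $\|P_0\mathbf 1\|^2=k$. The same argument gives $m$ for $H$. Since $W_G=W_H$, uniqueness of the partial fraction decomposition yields $k=m$. This identification of the constant term is the one step needing care, and it rests entirely on the nonsingularity of the divisor matrix already used in Lemma~\ref{lem:zero-main}.

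With $k=m$, Lemma~\ref{lem:component-cancellation} applies, since the components $K_1$ are isomorphic. We remove the isolated vertices one at a time, or equivalently subtract the constant $k$ from both walk generating functions and divide $x^k$ out of both characteristic polynomials. This shows that $G_0$ and $H_0$ are generalized cospectral, which completes the proof.
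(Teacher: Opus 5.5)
Your proof is correct, but it reaches $k=m$ by a somewhat different route than the paper.

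The paper argues by contradiction. Assuming $k>\ell$, it first cancels $\ell$ isolated vertices with Lemma~\ref{lem:component-cancellation}, so that $G_0\cup(k-\ell)K_1$ and $H_0$ are generalized cospectral. It then uses only the qualitative dichotomy of Lemma~\ref{lem:zero-main} ($0$ is a main eigenvalue of a chain graph iff the graph has an isolated vertex) to reach a contradiction.

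You instead show directly that the number of isolated vertices is itself a generalized spectral invariant. It equals $k=\|P_0\mathbf 1\|^2$, the constant part of $W_G$ in its partial fraction decomposition (equivalently $\lim_{z\to\infty}W_G(z)$), and you get this from $\mathbf 1_{G_0}\perp\ker A(G_0)$.

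Both arguments rest on the same fact, the nonsingularity of the divisor matrix of a connected chain graph. Yours is more quantitative and avoids both the contradiction and the preliminary cancellation. The paper's avoids spectral projections and needs nothing beyond the main-eigenvalue criterion.

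Two minor remarks. Your appeals to Lemma~\ref{lem:chain-closed} and to edge counts are redundant, since the hypothesis already says $H$ is a chain graph with at least one edge. Also, your use of $m$ for the number of isolated vertices of $H$ clashes with the paper's $m$ for the number of edges.
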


\begin{proof}
	Since a chain graph has at most one nontrivial connected component, we
	may write
	\[
	G=G_0\cup kK_1,
	\qquad
	H=H_0\cup \ell K_1,
	\]
	where $G_0$ and $H_0$ are connected chain graphs.
	
	Suppose, to the contrary, that $k\ne\ell$. By symmetry, we may assume
	that $k>\ell$. Applying Lemma~\ref{lem:component-cancellation}
	successively to $\ell$ isolated vertices, we conclude that
	\[
	G_0\cup (k-\ell)K_1
	\quad\text{and}\quad
	H_0
	\]
	are generalized cospectral.	Since $k-\ell>0$, the graph $G_0\cup (k-\ell)K_1$
	has an isolated vertex, whereas $H_0$ has none.
	By Lemma~\ref{lem:zero-main}, these two graphs
	cannot be generalized cospectral, a contradiction. Hence $k=\ell.$
	
	Finally, applying Lemma~\ref{lem:component-cancellation} successively
	to the $k$ isolated vertices shows that $G_0$ and $H_0$ are generalized
	cospectral.
\end{proof}

Proposition~\ref{prop:isolated-vertices} shows that, in studying the
generalized spectral characterization of chain graphs, it is sufficient
to restrict our attention to connected chain graphs.

For later use, we record the characteristic polynomial in the case
$h=2$. Let
\[
G=\mathrm{DNG}(s_1,s_2;t_1,t_2).
\]
With the cells ordered as $U_1,U_2,V_1,V_2$, the corresponding divisor
matrix is
\begin{equation*}
	D=
	\begin{pmatrix}
		0   & 0   & t_1 & t_2\\
		0   & 0   & t_1 & 0\\
		s_1 & s_2 & 0   & 0\\
		s_1 & 0   & 0   & 0
	\end{pmatrix}.
\end{equation*}
A direct computation gives
\begin{equation}\label{eq:charpoly_D}
	\det(xI_4-D)
	=
	x^4-(s_1t_1+s_1t_2+s_2t_1)x^2+s_1s_2t_1t_2.
\end{equation}
Combining this with \eqref{eq:char_divisor}, we obtain
\begin{equation}\label{f2}
	\chi(x,G)
	=
	x^{n-4}
	\bigl(
	x^4-(s_1t_1+s_1t_2+s_2t_1)x^2
	+s_1s_2t_1t_2
	\bigr).
\end{equation}
\section{Generalized cospectral mates of rank-$4$ chain graphs}
\label{sec:rank4}

In this section, we study generalized cospectral mates of connected
chain graphs of rank $4$. By Lemma~\ref{lem:rank-chain}, every connected
chain graph of rank $4$ is of the form
\[
G=\mathrm{DNG}(s_1,s_2;t_1,t_2),
\]
where $s_1,s_2,t_1,t_2$ are positive integers.

Moreover, by Lemma~\ref{lem:chain-closed} and
Proposition~\ref{prop:isolated-vertices}, every generalized cospectral
mate of $G$ is again a connected chain graph. Since cospectral graphs
have the same rank, any such mate is also of rank $4$, and hence can be
written as
\[
H=\mathrm{DNG}(s_1',s_2';t_1',t_2').
\]

For convenience, put
\begin{equation}\label{eq:nmbc}
	\begin{aligned}
		n&=s_1+s_2+t_1+t_2,\\
		m&=s_1t_1+s_1t_2+s_2t_1,\\
		c&=s_1s_2t_1t_2,\\
		b&=s_2t_2(s_1+t_1).	
	\end{aligned}
\end{equation}
Here $n$ and $m$ are the number of vertices and the number of edges of
$G$, respectively; the role of $b$ will become clear below. By \eqref{f2},
\begin{equation}\label{eq:rank4_charpoly}
	\chi(x,G)
	=
	x^{n-4}\left(x^4-mx^2+c\right).
\end{equation}

The following observation gives a characterization of ordinary
cospectrality within this class.

\begin{lemma}\label{lem:rank4_cospectral}
	Let
	\[
	G=\mathrm{DNG}(s_1,s_2;t_1,t_2)
	\quad\text{and}\quad
	H=\mathrm{DNG}(s_1',s_2';t_1',t_2')
	\]
	be connected chain graphs of rank $4$. Then $G$ and $H$ are cospectral
	if and only if
	\begin{align}
		s_1+s_2+t_1+t_2
		&=
		s_1'+s_2'+t_1'+t_2',
		\label{eq:condition1}\\
		s_1t_1+s_1t_2+s_2t_1
		&=
		s_1't_1'+s_1't_2'+s_2't_1',
		\label{eq:condition2}\\
		s_1s_2t_1t_2
		&=
		s_1's_2't_1't_2'.
		\label{eq:condition3}
	\end{align}
\end{lemma}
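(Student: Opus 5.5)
The plan is to compare characteristic polynomials directly using \eqref{eq:rank4_charpoly}. By that formula,
\[
\chi(x,G)=x^{n-4}\left(x^4-mx^2+c\right),\qquad
\chi(x,H)=x^{n'-4}\left(x^4-m'x^2+c'\right),
\]
where $n',m',c'$ are defined from $s_1',s_2',t_1',t_2'$ exactly as in \eqref{eq:nmbc}. Cospectrality of $G$ and $H$ means $\chi(x,G)=\chi(x,H)$ as polynomials. So the task is to show that this polynomial identity is equivalent to the three equalities $n=n'$, $m=m'$ and $c=c'$, which are precisely \eqref{eq:condition1}--\eqref{eq:condition3}.

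For sufficiency, I would simply substitute: if $n=n'$, $m=m'$ and $c=c'$, the two displayed polynomials coincide term by term, so $G$ and $H$ are cospectral.

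For necessity, assume $\chi(x,G)=\chi(x,H)$.
\begin{itemize}
\item Comparing degrees gives $n=n'$, since both polynomials are monic of degree equal to the order of the graph.
\item Cancelling the common factor $x^{n-4}$ in the polynomial ring yields $x^4-mx^2+c=x^4-m'x^2+c'$.
\item Comparing the coefficients of $x^2$ and $x^0$ then gives $m=m'$ and $c=c'$.
\end{itemize}

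The one point needing care is the cancellation of $x^{n-4}$. One might worry that a power of $x$ hidden in the quartic factor disturbs the comparison. It does not: $c=s_1s_2t_1t_2\neq 0$ and likewise $c'\neq0$, so neither quartic is divisible by $x$. Hence the multiplicity of the root $0$ is exactly $n-4$ on both sides, consistent with Lemma~\ref{lem:rank-chain}, and the cancellation is legitimate in any case since $\mathbb{R}[x]$ is an integral domain.

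There is no real obstacle here. The whole argument is a coefficient comparison resting on the formula \eqref{f2} derived from the divisor matrix, together with the nonvanishing of $c$ and $c'$.
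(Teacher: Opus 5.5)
Your proposal is correct and takes the same route as the paper: both read $n$, $m$ and $c$ off the factorization $\chi(x,G)=x^{n-4}(x^4-mx^2+c)$ in \eqref{eq:rank4_charpoly}, and you spell out the comparison of degree and coefficients that the paper leaves as ``immediate.'' Your remark that $c\neq0$ is harmless but not needed, since cancelling $x^{n-4}$ is already valid in the integral domain $\mathbb{R}[x]$.
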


\begin{proof}
	This follows immediately from \eqref{eq:rank4_charpoly}, since the
	three quantities in \eqref{eq:condition1}--\eqref{eq:condition3}
	determine the characteristic polynomial of a rank-$4$ connected
	chain graph.
\end{proof}

Thus far only the ordinary spectrum has been used. To obtain
an additional condition from the generalized spectrum, we use
the walk numbers defined in Section~2. Recall that generalized
cospectral graphs have the same walk generating function and
hence the same number of walks of each length.

\begin{lemma}\label{lem:walk3_rank4}
	Let $G=\mathrm{DNG}(s_1,s_2;t_1,t_2)$, and let
	$n,m,b,c$ be as in \eqref{eq:nmbc}. Then
	\begin{align}
		N_0(G)&=n,\label{eq:N0}\\
		N_1(G)&=2m,\label{eq:N1}\\
		N_2(G)&=nm-b,\label{eq:N2}\\
		N_3(G)&=2(m^2-c).\label{eq:N3}
	\end{align}
\end{lemma}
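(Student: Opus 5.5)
Because $\Pi=\{U_1,U_2,V_1,V_2\}$ is equitable, walk counts can be computed at the level of the divisor matrix $D$. Let $\mathbf{s}=(s_1,s_2,t_1,t_2)^{\top}$ be the vector of cell sizes. Then
\[
N_k(G)=\mathbf{1}^{\top}A^k\mathbf{1}=\mathbf{s}^{\top}D^k\mathbf{1}_4 .
\]
To see this, note that $A^k\mathbf{1}$ is constant on each cell, and its value on cell $i$ is $(D^k\mathbf{1}_4)_i$. Summing over vertices then weights each cell by its size. So the plan is to compute the vectors $D\mathbf{1},D^2\mathbf{1},D^3\mathbf{1}$ explicitly and pair each with $\mathbf{s}$.

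The individual steps are as follows.
\begin{itemize}
\item For $k=0$ we get $N_0=\mathbf{s}^{\top}\mathbf{1}=n$.
\item The degree vector is $D\mathbf{1}=(t_1+t_2,\;t_1,\;s_1+s_2,\;s_1)^{\top}$. Pairing with $\mathbf{s}$ gives $N_1=2m$, which is just the handshake identity.
\item Since $D$ is symmetric with respect to the weights (i.e.\ $\mathrm{diag}(\mathbf{s})D$ is symmetric), we have $N_2=\|A\mathbf{1}\|^2=\sum_i s_i\,(D\mathbf{1})_i^2$. Explicitly,
\[
N_2=s_1(t_1+t_2)^2+s_2t_1^2+t_1(s_1+s_2)^2+t_2s_1^2 .
\]
We expand this and compare with $nm-b$, where $nm=(s_1+s_2+t_1+t_2)(s_1t_1+s_1t_2+s_2t_1)$. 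This is a finite polynomial identity to check term by term. For example, the $s_1t_1^2$ terms appear with coefficient $1$ on the left and coefficient $1$ in $nm$. The leftover terms of $nm$ should be exactly $s_2t_2s_1+s_2t_2t_1=b$.
\item Similarly, $N_3=(A\mathbf{1})^{\top}A(A\mathbf{1})=\sum_{i,j} s_i (D\mathbf{1})_i D_{ij}(D\mathbf{1})_j$. Equivalently, $N_3=2\sum_{uv\in E}d(u)d(v)$, summed over the edge classes $U_1V_1$, $U_1V_2$, $U_2V_1$. This gives
\[
N_3=2\bigl[s_1t_1(t_1+t_2)(s_1+s_2)+s_1t_2(t_1+t_2)s_1+s_2t_1\,t_1(s_1+s_2)\bigr].
\]
We then verify that the bracket equals $m^2-s_1s_2t_1t_2$.
\end{itemize}

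There is a cleaner route for $N_3$ that I would check first. By Cayley--Hamilton applied to \eqref{eq:charpoly_D}, $D^4=mD^2-cI$, but that relation is of higher degree than needed here. Instead, I would use the bipartite structure. Since $G$ is bipartite, $N_3=2\sum_{uv\in E}d(u)d(v)$, and $m^2$ counts ordered pairs of edges. The difference $m^2-N_3/2$ then counts ordered pairs of edges $(e,f)$ with no edge joining an endpoint of $e$ to an endpoint of $f$ "crosswise", apart from the edges themselves. I expect this count to be $c$, because in a chain graph with $h=2$ the only such configurations involve an edge in $U_2V_1$ and one in $U_1V_2$ (the missing $U_2V_2$ adjacency). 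I would confirm this combinatorial count against the direct algebraic expansion.

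The main obstacle is purely bookkeeping: the polynomial identities for $N_2$ and $N_3$. Of the two, $N_3$ has more terms, and I would organize it by collecting the monomials according to their degree in $s_2$ and $t_2$.
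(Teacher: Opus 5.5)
Your proposal is correct and follows essentially the same route as the paper: $N_2=\sum_v d(v)^2$ and $N_3=2\sum_{uv\in E}d(u)d(v)$, with exactly the cell-by-cell expressions you wrote, and both identities check out. The paper avoids term-by-term expansion by using $m=s_1T+s_2t_1=t_1S+s_1t_2$ (with $S=s_1+s_2$, $T=t_1+t_2$), so the $N_3$ bracket becomes $(s_1T+s_2t_1)(t_1S+s_1t_2)-s_1s_2t_1t_2$ --- this is precisely the algebraic form of your combinatorial shortcut, stated exactly as follows: $m^2=\sum_{u\in U,\,v\in V}d(u)d(v)$, and the only non-adjacent pairs $(u,v)$ are the $s_2t_2$ pairs in $U_2\times V_2$, each contributing $d(u)d(v)=t_1s_1$.
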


\begin{proof}
	The first two identities are immediate. Put
	\[
	S=s_1+s_2,\qquad T=t_1+t_2.
	\]
	The vertices in $U_1,U_2,V_1,V_2$ have degrees
	$T,t_1,S,s_1$, respectively. Since
	\[
	N_2(G)
	=\mathbf{1}^{\top}A(G)^2\mathbf{1}
	=\sum_{v\in V(G)}d_G(v)^2
	\]
	and
	\[
	m=s_1T+s_2t_1=t_1S+s_1t_2,
	\]
	we obtain
	\begin{align*}
		N_2(G)
		&=s_1T^2+s_2t_1^2+t_1S^2+t_2s_1^2\\
		&=m(S+T)-s_2t_2(s_1+t_1)\\
		&=nm-b.
	\end{align*}
	
	Finally, counting walks of length $3$ by their middle edge
	gives
	\begin{align*}
		\frac{1}{2}N_3(G)
		&=\sum_{uv\in E(G)}d_G(u)d_G(v)\\
		&=s_1t_1TS+s_1^2t_2T+s_2t_1^2S\\
		&=(s_1T+s_2t_1)(t_1S+s_1t_2)-s_1s_2t_1t_2\\
		&=m^2-c.
	\end{align*}
	This proves \eqref{eq:N3}.
\end{proof}

Consequently, if $G$ and $H$ are generalized cospectral, then
\eqref{eq:condition1}--\eqref{eq:condition3} hold and
$N_2(G)=N_2(H)$. Since $n$ and $m$ are the same for both
graphs, \eqref{eq:N2} yields
\begin{equation}\label{eq:condition4}
	s_2t_2(s_1+t_1)
	=
	s_2't_2'(s_1'+t_1').
\end{equation}
We now show that these four conditions are also sufficient
by deriving an explicit formula for the walk generating
function.
\begin{lemma}\label{lem:walk_gen_rank4}
	Let
	\[
	G=\mathrm{DNG}(s_1,s_2;t_1,t_2)
	\]
	be a connected chain graph of rank $4$, and let $n,m,b,c$ be as in
	\eqref{eq:nmbc}. Then
	\begin{equation}\label{eq:walk_gen_formula}
		W_G(z)
		=
		\frac{
			n+2mz-b z^2-2cz^3
		}{
			1-mz^2+cz^4
		}.
	\end{equation}
\end{lemma}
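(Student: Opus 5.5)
The plan is to compute $W_G(z)=\mathbf{1}^{\top}(I-zA)^{-1}\mathbf{1}$ by a linear recurrence for the walk counts, using Cayley--Hamilton applied to the divisor matrix $D$. Since $\Pi$ is equitable with divisor matrix $D$, the characteristic matrix $P$ of $\Pi$ satisfies $AP=PD$. Hence $A^k\mathbf{1}=A^kP\mathbf{1}_4=PD^k\mathbf{1}_4$, and so
\[
N_k(G)=\mathbf{1}^{\top}PD^k\mathbf{1}_4=\mathbf{w}^{\top}D^k\mathbf{1}_4,\qquad \mathbf{w}=(s_1,s_2,t_1,t_2)^{\top}.
\]
By \eqref{eq:charpoly_D}, $D^4=mD^2-cI_4$. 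Multiplying by $D^{k}$ and sandwiching between $\mathbf{w}^{\top}$ and $\mathbf{1}_4$ gives the recurrence
\[
N_{k+4}=mN_{k+2}-cN_k\qquad(k\ge 0).
\]

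Next, multiply $W_G(z)=\sum_k N_kz^k$ by $1-mz^2+cz^4$. The recurrence kills every coefficient of $z^j$ with $j\ge4$. The surviving coefficients are:
\[
z^0:\ N_0,\qquad z^1:\ N_1,\qquad z^2:\ N_2-mN_0,\qquad z^3:\ N_3-mN_1.
\]
Substituting Lemma~\ref{lem:walk3_rank4}, these become $n$, $2m$, $nm-b-mn=-b$, and $2(m^2-c)-2m^2=-2c$. This gives exactly the numerator $n+2mz-bz^2-2cz^3$ in \eqref{eq:walk_gen_formula}. The identity holds as formal power series, and also as rational functions for small $|z|$, which is all that is needed.

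The only step needing care is the justification of $N_k=\mathbf{w}^{\top}D^k\mathbf{1}_4$, that is, that $AP=PD$ and $\mathbf{1}^{\top}P=\mathbf{w}^{\top}$. Both are standard facts about equitable partitions, so I do not expect a real obstacle. Everything else is bookkeeping, already done in Lemma~\ref{lem:walk3_rank4}.

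As an alternative, one could avoid $P$ altogether. The minimal polynomial of $A$ divides $x\,(x^4-mx^2+c)$, because $A$ is diagonalizable and its nonzero eigenvalues are roots of $x^4-mx^2+c$. This yields the same recurrence, valid for $k\ge1$. The extra $k=0$ case is then handled because the kernel of $A$ is orthogonal to $\mathbf{1}$: by Lemma~\ref{lem:zero-main}, $0$ is not a main eigenvalue.
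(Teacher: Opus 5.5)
Your proof is correct and is essentially the paper's argument. The paper also uses $AC=CD$ for the characteristic matrix and Cayley--Hamilton for $D$ to get $A^4\mathbf{1}=mA^2\mathbf{1}-c\mathbf{1}$, hence $N_{k+4}=mN_{k+2}-cN_k$ for all $k\ge 0$, and then reads off the four surviving coefficients from Lemma~\ref{lem:walk3_rank4}; your alternative via the minimal polynomial of $A$ is also valid, with the $k=0$ case correctly handled by the fact that $0$ is not a main eigenvalue (Lemma~\ref{lem:zero-main}).
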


\begin{proof}
Let $D$ be the divisor matrix of $G$ corresponding to the
equitable partition
\[
\{U_1,U_2,V_1,V_2\},
\]
and let $C$ be its characteristic matrix.
Since the partition is equitable, we have
\[
AC=CD,
\]
where $A=A(G)$. Consequently, for any polynomial
$f(x)$, we have
\[
f(A)C=Cf(D).
\]
By \eqref{eq:charpoly_D}, the characteristic polynomial of $D$ is
\[
\det(xI_4-D)=x^4-mx^2+c,
\]
with $m$ and $c$ as in \eqref{eq:nmbc}. Hence, by the
Cayley--Hamilton theorem,
\[
D^4-mD^2+cI_4=0,
\]
which implies
\[
(A^4-mA^2+cI)C=0.
\]
Since $\mathbf{1}=C\mathbf{1}_4$, it follows that
\begin{equation}\label{eq:annihilate_one}
	A^4\mathbf{1}
	=
	mA^2\mathbf{1}-c\mathbf{1}.
\end{equation}
	Multiplying \eqref{eq:annihilate_one} by $A^k$ and then by
	$\mathbf{1}^\top$, we obtain
	\begin{equation}\label{eq:walk_recurrence}
		N_{k+4}(G)
		=
		mN_{k+2}(G)-cN_k(G),
		\qquad k\geq0.
	\end{equation}
	Multiplying the series
	\[
	W_G(z)=\sum_{k\geq0}N_k(G)z^k
	\]
	by $1-mz^2+cz^4$ and using \eqref{eq:N0}--\eqref{eq:N3} and
	\eqref{eq:walk_recurrence}, we obtain
	\begin{align*}
		(1-mz^2+cz^4)W_G(z)&=\sum_{k\ge 0} N_kz^k-m\sum_{k\ge 0} N_kz^{k+2}+c\sum_{k\ge 0} N_kz^{k+4}\\
		&=\sum_{k=0}^3 N_kz^k-m\sum_{k= 0}^1 N_kz^{k+2}\\
		&=N_0+N_1z+(N_2-mN_0)z^2+(N_3-mN_1)z^3\\
		&=n+2mz-bz^2-2cz^3.
	\end{align*}
	This proves \eqref{eq:walk_gen_formula}.
\end{proof}

We can now give the desired characterization.

\begin{theorem}\label{thm:rank4_gcospectral}
	Let
	\[
	G=\mathrm{DNG}(s_1,s_2;t_1,t_2)
	\]
	and
	\[
	H=\mathrm{DNG}(s_1',s_2';t_1',t_2')
	\]
	be connected chain graphs of rank $4$. Then $G$ and $H$ are
	generalized cospectral if and only if the following four equalities
	hold:
	\begin{align}
		s_1+s_2+t_1+t_2
		&=
		s_1'+s_2'+t_1'+t_2',
		\label{eq:main_condition1}\\
		s_1t_1+s_1t_2+s_2t_1
		&=
		s_1't_1'+s_1't_2'+s_2't_1',
		\label{eq:main_condition2}\\
		s_1s_2t_1t_2
		&=
		s_1's_2't_1't_2',
		\label{eq:main_condition3}\\
		s_2t_2(s_1+t_1)
		&=
		s_2't_2'(s_1'+t_1').
		\label{eq:main_condition4}
	\end{align}
\end{theorem}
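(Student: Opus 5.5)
The proof splits into the two directions, and both rest on Lemmas~\ref{lem:rank4_cospectral}, \ref{lem:walk3_rank4} and \ref{lem:walk_gen_rank4}, together with the characterization recalled in Section~2: two graphs are generalized cospectral if and only if they are cospectral and have equal walk generating functions.

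\emph{Necessity.} Suppose $G$ and $H$ are generalized cospectral. Then they are cospectral, so Lemma~\ref{lem:rank4_cospectral} gives \eqref{eq:main_condition1}--\eqref{eq:main_condition3}; in particular $n=n'$, $m=m'$ and $c=c'$. They also have equal walk generating functions, hence $N_2(G)=N_2(H)$. By \eqref{eq:N2} this reads $nm-b=n'm'-b'$, so $b=b'$, which is \eqref{eq:main_condition4}. This is exactly the derivation of \eqref{eq:condition4} already given before Lemma~\ref{lem:walk_gen_rank4}.

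\emph{Sufficiency.} Assume the four equalities hold. Then the parameters agree: $(n,m,c,b)=(n',m',c',b')$. By Lemma~\ref{lem:rank4_cospectral} the first three equalities make $G$ and $H$ cospectral. By Lemma~\ref{lem:walk_gen_rank4}, $W_G(z)$ is the explicit rational function \eqref{eq:walk_gen_formula}, which depends only on $n,m,b,c$. Hence $W_G(z)=W_H(z)$, and the characterization gives that $G$ and $H$ are generalized cospectral.

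There is no real obstacle here, since all the work sits in Lemma~\ref{lem:walk_gen_rank4}. The one point to check is that the formula \eqref{eq:walk_gen_formula} applies to $H$ as well as to $G$. It does, because $H$ is itself a connected rank-$4$ chain graph $\mathrm{DNG}(s_1',s_2';t_1',t_2')$, and Lemma~\ref{lem:walk_gen_rank4} was proved for an arbitrary such graph.
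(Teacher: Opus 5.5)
Your proof is correct and follows the paper's argument. Necessity comes from Lemma~\ref{lem:rank4_cospectral} together with $N_2(G)=N_2(H)$ via \eqref{eq:N2}, and sufficiency from the formula \eqref{eq:walk_gen_formula} of Lemma~\ref{lem:walk_gen_rank4}, which depends only on $n,m,b,c$.
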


\begin{proof}
Suppose first that $G$ and $H$ are generalized cospectral. Then
\eqref{eq:main_condition1}--\eqref{eq:main_condition3} follow
directly from Lemma~\ref{lem:rank4_cospectral}, while
\eqref{eq:main_condition4} is exactly \eqref{eq:condition4}.
	
	Conversely, suppose that
	\eqref{eq:main_condition1}--\eqref{eq:main_condition4} hold.
	By the first three equalities and
	Lemma~\ref{lem:rank4_cospectral}, $G$ and $H$ are cospectral.
	
	Let $n,m,c,b$ denote the four quantities appearing in \eqref{eq:nmbc}. By
	Lemma~\ref{lem:walk_gen_rank4},
	\[
	W_G(z)
	=
	\frac{
		n+2mz-b z^2-2cz^3
	}{
		1-mz^2+cz^4
	}
	=
	W_H(z).
	\]
	Thus $G$ and $H$ are cospectral and have the same walk generating
	function. It follows that they are generalized cospectral.
\end{proof}
\section{The number of generalized cospectral mates}
\label{sec:number_mates}

In this section, we show that a connected chain graph of rank $4$
has at most one non-isomorphic generalized cospectral mate. The proof
is constructive and also provides an efficient procedure for finding
the possible mate.

Let $G=\mathrm{DNG}(s_1,s_2;t_1,t_2)$ be fixed, and let
$n,m,c,b$ be as in \eqref{eq:nmbc}.
By Theorem~\ref{thm:rank4_gcospectral}, a connected rank-$4$
chain graph
\[
H=\mathrm{DNG}(x_1,x_2;y_1,y_2)
\]
is generalized cospectral with $G$ if and only if
\begin{align}
	x_1+x_2+y_1+y_2&=n, \label{eq:sys_n}\\
	x_1y_1+x_1y_2+x_2y_1&=m, \label{eq:sys_m}\\
	x_1x_2y_1y_2&=c, \label{eq:sys_c}\\
	x_2y_2(x_1+y_1)&=b. \label{eq:sys_b}
\end{align}

Since interchanging the two parts of a bipartite graph does not change
the graph up to isomorphism, we identify
\[
(x_1,x_2;y_1,y_2)
\quad\text{and}\quad
(y_1,y_2;x_1,x_2).
\]
Thus, whenever convenient, we may assume that
\[
x_1\leq y_1.
\]

We first reduce the four-variable system
\eqref{eq:sys_n}--\eqref{eq:sys_b} to a quadratic equation in the
single variable $x_2y_2$.

\begin{lemma}\label{lem:quadratic_z}
	Let $(x_1,x_2;y_1,y_2)\in\mathbb{R}_{>0}^4$ satisfy
	\eqref{eq:sys_n}--\eqref{eq:sys_b}, and put
	\[
	\xi=x_2y_2.
	\]
	Then $\xi$ is a root of
	\begin{equation}\label{eq:Phi}
		\Phi(\xi)
		=
		(m^2-4c)\xi^2
		+
		(b^2+cn^2-mnb-2mc)\xi
		+
		(c^2-cnb+mb^2)
		=0.
	\end{equation}
	Moreover,
	\[
	m^2-4c>0,
	\]
	so $\Phi$ is always a quadratic polynomial.
\end{lemma}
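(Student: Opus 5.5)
The plan is to eliminate everything except $\xi=x_2y_2$ by working with elementary symmetric functions of the two pairs $\{x_1,y_1\}$ and $\{x_2,y_2\}$. Put
\[
p=x_1+y_1,\qquad q=x_1y_1,\qquad r=x_2+y_2,\qquad \xi=x_2y_2 .
\]
Then \eqref{eq:sys_c} and \eqref{eq:sys_b} read $q\xi=c$ and $p\xi=b$. Since $\xi>0$, this gives $q=c/\xi$ and $p=b/\xi$. Equation \eqref{eq:sys_n} gives $r=n-p=n-b/\xi$. So $p,q,r$ are all rational functions of $\xi$.

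The only remaining equation is \eqref{eq:sys_m}. It involves the mixed quantity $u:=x_1y_2+x_2y_1=m-q$, which is not symmetric in each pair separately. To handle it, I pair $u$ with its companion $v:=x_1x_2+y_1y_2$. These satisfy
\[
u+v=pr,\qquad uv=\xi(p^2-2q)+q(r^2-2\xi),
\]
where the second identity comes from expanding $(x_1y_2+x_2y_1)(x_1x_2+y_1y_2)$ and grouping the terms as $x_2y_2(x_1^2+y_1^2)+x_1y_1(x_2^2+y_2^2)$. Eliminating $v$ gives the single relation
\[
u(pr-u)=\xi(p^2-2q)+q(r^2-2\xi),\qquad u=m-c/\xi .
\]

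Next I substitute $p=b/\xi$, $q=c/\xi$ and $r=n-b/\xi$ into this relation and clear denominators by multiplying by a suitable power of $\xi$ (namely $\xi^2$). This yields a polynomial identity in $\xi$.

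This routine but error-prone expansion is the main obstacle. I must check two things:
\begin{itemize}
\item any spurious higher-degree or constant terms cancel, or else a common factor of $\xi$ (nonzero, since $\xi>0$) can be divided out;
\item what remains is exactly $\Phi(\xi)$.
\end{itemize}
As a sanity check on the top coefficient: on the left, $-u^2$ contributes $-m^2$ times the highest power of $\xi$. On the right, the two $-2q\xi$-type terms contribute $-4c$ times the same power. Moving everything to one side gives leading coefficient $m^2-4c$, as in \eqref{eq:Phi}. I would verify the remaining coefficients $b^2+cn^2-mnb-2mc$ and $c^2-cnb+mb^2$ by collecting the terms in $b^2$, $cn^2$, $mnb$, $mc$, and so on, in the same way.

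Finally, for $m^2-4c>0$ I use the original parameters, which are positive. By AM--GM,
\[
m=s_1t_1+(s_1t_2+s_2t_1)\ge s_1t_1+2\sqrt{s_1s_2t_1t_2}=s_1t_1+2\sqrt c>2\sqrt c .
\]
Since both sides are positive, squaring gives $m^2>4c$. Therefore $\Phi$ is genuinely quadratic.
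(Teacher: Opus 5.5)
Your proposal is correct and is essentially the paper's argument. In your notation the paper's identity reads $(p^2-4q)(r^2-4\xi)=(pr-2u)^2$, and your relation $u(pr-u)=\xi(p^2-2q)+q(r^2-2\xi)$ is exactly this identity rearranged and divided by $4$; your AM--GM proof of $m^2-4c>0$ is also the paper's.

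One small correction: the denominators go up to $\xi^{-3}$, so clearing them with $\xi^3$ gives $\xi\,\Phi(\xi)=0$. Equivalently, after multiplying by $\xi^2$ the two $cb^2/\xi$ terms cancel. Your hedge about cancelling terms or dividing out a factor of $\xi$ already covers this.
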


\begin{proof}
	Set
	\[
	u=x_1+y_1,\qquad
	v=x_1y_1,\qquad
	w=x_2+y_2.
	\]
	Since $\xi=x_2y_2$, equations \eqref{eq:sys_c} and
	\eqref{eq:sys_b} give
	\begin{equation}\label{eq:uv}
		u=\frac{b}{\xi},
		\qquad
		v=\frac{c}{\xi},
	\end{equation}
	while \eqref{eq:sys_n} yields
	\begin{equation}\label{eq:w_sum}
		w=n-\frac{b}{\xi}.
	\end{equation}
	Moreover, by \eqref{eq:sys_m},
	\[
	x_1y_2+x_2y_1=m-v.
	\]
	Hence
	\[
	(x_1-y_1)(x_2-y_2)
	=(x_1+y_1)(x_2+y_2)
	-2(x_1y_2+x_2y_1)
	=uw-2(m-v).
	\]
	Squaring both sides and using
	\[
	(x_1-y_1)^2=u^2-4v,
	\qquad
	(x_2-y_2)^2=w^2-4\xi,
	\]
	we obtain
	\begin{equation}\label{eq:disc_identity}
		(u^2-4v)(w^2-4\xi)
		=
		\bigl(uw-2(m-v)\bigr)^2.
	\end{equation}
	
	We now substitute \eqref{eq:uv} and \eqref{eq:w_sum} into
	\eqref{eq:disc_identity}. The three factors appearing above become
	\[
	u^2-4v
	=
	\frac{b^2-4c\xi}{\xi^2},
	\]
	\[
	w^2-4\xi
	=
	\frac{n^2\xi^2-2nb\xi+b^2-4\xi^3}{\xi^2},
	\]
	and
	\[
	uw-2(m-v)
	=
	\frac{-2m\xi^2+(nb+2c)\xi-b^2}{\xi^2}.
	\]
	Therefore, after clearing denominators, we obtain
	\[
	(b^2-4c\xi)
	\bigl(n^2\xi^2-2nb\xi+b^2-4\xi^3\bigr)
	=
	\bigl(-2m\xi^2+(nb+2c)\xi-b^2\bigr)^2.
	\]
	Expanding and simplifying, the difference between
	the two sides becomes
	\[
	-4\xi^2\Bigl[
	(m^2-4c)\xi^2
	+(b^2+cn^2-mnb-2mc)\xi
	+(c^2-cnb+mb^2)
	\Bigr].
	\]
	Since $\xi>0$, we obtain \eqref{eq:Phi}.
	
	Finally, since
	\[
	m=s_1t_1+s_1t_2+s_2t_1
	>s_1t_2+s_2t_1
	\ge 2\sqrt{s_1s_2t_1t_2}
	=2\sqrt c,
	\]
	we have $m^2-4c>0$.
\end{proof}
Once a root $\xi$ of \eqref{eq:Phi} is known, the four parameters can
be recovered almost completely.
\begin{lemma}\label{lem:reconstruct}
	Let $\xi>0$ correspond to a positive real solution of
	\eqref{eq:sys_n}--\eqref{eq:sys_b}. Then
	$x_1,y_1$ are the roots of
	\begin{equation}\label{eq:quad_xy1}
		Z^2-\frac{b}{\xi}Z+\frac{c}{\xi}=0,
	\end{equation}
	whereas $x_2,y_2$ are the roots of
	\begin{equation}\label{eq:quad_xy2}
		Z^2-
		\left(n-\frac{b}{\xi}\right)Z+\xi=0.
	\end{equation}
	For a fixed $\xi$, these two unordered pairs determine
	at most one solution of
	\eqref{eq:sys_n}--\eqref{eq:sys_b}
	up to interchange of the two bipartition classes.
\end{lemma}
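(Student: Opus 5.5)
The plan follows the proof of Lemma~\ref{lem:quadratic_z}, reusing its notation $u=x_1+y_1$, $v=x_1y_1$, $w=x_2+y_2$ and $\xi=x_2y_2$. First we note that any positive solution of \eqref{eq:sys_n}--\eqref{eq:sys_b} gives, via \eqref{eq:sys_c} and \eqref{eq:sys_b}, $u=b/\xi$ and $v=c/\xi$, exactly as in \eqref{eq:uv}. Then \eqref{eq:sys_n} gives $w=n-b/\xi$, as in \eqref{eq:w_sum}. Vieta's formulas now show that $x_1,y_1$ are the roots of \eqref{eq:quad_xy1} and $x_2,y_2$ are the roots of \eqref{eq:quad_xy2}. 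This proves the first two assertions.

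For the uniqueness claim, fix $\xi$. The multisets $\{x_1,y_1\}=\{\alpha,\beta\}$ and $\{x_2,y_2\}=\{\gamma,\delta\}$ are then determined. A solution is therefore one of the two matchings $(x_1,y_1;x_2,y_2)=(\alpha,\beta;\gamma,\delta)$ or $(\alpha,\beta;\delta,\gamma)$, together with their swaps. We must show that at most one matching, up to the interchange $(x_1,x_2;y_1,y_2)\leftrightarrow(y_1,y_2;x_1,x_2)$, satisfies all four equations.

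Equations \eqref{eq:sys_n}, \eqref{eq:sys_c} and \eqref{eq:sys_b} depend only on the unordered pairs, so only \eqref{eq:sys_m} can distinguish the two matchings. That equation is equivalent to $x_1y_2+x_2y_1=m-v$. Equivalently, as in the proof of Lemma~\ref{lem:quadratic_z}, it says
\[
(x_1-y_1)(x_2-y_2)=uw-2(m-v),
\]
where the right-hand side is fixed once $\xi$ is fixed.

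Passing from one matching to the other changes the sign of $x_2-y_2$ while keeping $x_1-y_1$. So if both matchings satisfied \eqref{eq:sys_m}, the product $P=(x_1-y_1)(x_2-y_2)$ would equal $-P$, which forces $(\alpha-\beta)(\gamma-\delta)=0$. In that case $\alpha=\beta$ or $\gamma=\delta$, and the two matchings agree up to the bipartition swap. To see this, suppose $\alpha=\beta$. Then $(\alpha,\alpha;\gamma,\delta)$ and $(\alpha,\alpha;\delta,\gamma)$ are interchanged by the swap $(x_1,x_2;y_1,y_2)\mapsto(y_1,y_2;x_1,x_2)$, which sends $(\alpha,\gamma;\alpha,\delta)$ to $(\alpha,\delta;\alpha,\gamma)$. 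The case $\gamma=\delta$ is the same. Otherwise $P\neq 0$, and its sign selects exactly one of the two matchings.

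The main obstacle is only bookkeeping: we must track which orderings are genuinely distinct under the identification of $(x_1,x_2;y_1,y_2)$ with $(y_1,y_2;x_1,x_2)$. We must also handle the degenerate case where a discriminant vanishes, and that case is covered by the argument above.
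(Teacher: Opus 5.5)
Your proof is correct and follows essentially the same route as the paper. You use Vieta's formulas via \eqref{eq:sys_n}, \eqref{eq:sys_c}, \eqref{eq:sys_b}, note that only \eqref{eq:sys_m} distinguishes the two matchings, and use the sign flip of $(x_1-y_1)(x_2-y_2)$, which is the same computation as the paper's difference $(x_1-y_1)(y_2-x_2)$; the degenerate cases $x_1=y_1$ or $x_2=y_2$ are absorbed by the bipartition interchange, exactly as in the paper.
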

\begin{proof}
	The first assertion follows immediately from
	\[
	x_1+y_1=\frac{b}{\xi},
	\qquad
	x_1y_1=\frac{c}{\xi},
	\]
	and the second follows from
	\[
	x_2+y_2=n-\frac{b}{\xi},
	\qquad
	x_2y_2=\xi.
	\]
	It remains only to determine how the two roots of
	\eqref{eq:quad_xy2} are assigned to $x_2$ and $y_2$.	We may assume that $x_1\leq y_1$. If $x_1=y_1$, then interchanging
	$x_2$ and $y_2$ amounts precisely to interchanging the two
	bipartition classes, and hence gives an equivalent solution.
	
	Suppose now that $x_1<y_1$. If $x_2\neq y_2$, the two possible
	assignments of the roots of \eqref{eq:quad_xy2} give two values for
	the left-hand side of \eqref{eq:sys_m}. Their difference is
	\[
	(x_1-y_1)(y_2-x_2),
	\]
	which is nonzero. Therefore, at most one of the two assignments can
	satisfy \eqref{eq:sys_m}. If $x_2=y_2$, there is of course only one
	assignment.	Hence a fixed value of $\xi$ gives at most one solution class.
\end{proof}

We can now prove the main result of this section.

\begin{lemma}\label{lem:atmost_two_solutions}
	For fixed positive parameters $s_1,s_2,t_1,t_2$, the system
	\eqref{eq:sys_n}--\eqref{eq:sys_b} has at most two equivalence
	classes of positive real solutions.
\end{lemma}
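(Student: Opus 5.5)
The argument combines the two preceding lemmas. Take any positive real solution $(x_1,x_2;y_1,y_2)$ of \eqref{eq:sys_n}--\eqref{eq:sys_b} and set $\xi=x_2y_2>0$. Lemma~\ref{lem:quadratic_z} shows that $\xi$ is a root of $\Phi$. Since $m^2-4c>0$, $\Phi$ is a genuine quadratic, not the zero polynomial, so it has at most two real roots. This gives at most two admissible values of $\xi$.

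Next, fix one such root $\xi$. By Lemma~\ref{lem:reconstruct}, the unordered pair $\{x_1,y_1\}$ is the root set of \eqref{eq:quad_xy1} and the unordered pair $\{x_2,y_2\}$ is the root set of \eqref{eq:quad_xy2}. Both pairs are therefore determined by $\xi$ alone. The same lemma shows that the remaining freedom, namely how the roots are assigned to $x_2$ and $y_2$, yields at most one solution up to interchanging the bipartition classes.

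It remains to check that the value of $\xi$ is an invariant of the equivalence class. Interchanging the classes sends $(x_1,x_2;y_1,y_2)$ to $(y_1,y_2;x_1,x_2)$, and $x_2y_2=y_2x_2$, so $\xi$ is unchanged. Hence the map from equivalence classes of positive real solutions to roots of $\Phi$ is well defined. By the previous paragraph it is injective. Combined with the bound on the number of roots, there are at most two equivalence classes.

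The only delicate point is the leading coefficient of $\Phi$. If it vanished, $\Phi$ could be identically zero and the root count would fail. This is exactly why Lemma~\ref{lem:quadratic_z} establishes $m^2-4c>0$, so there is no genuine obstacle and the proof reduces to bookkeeping.
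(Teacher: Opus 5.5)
Your proposal is correct and follows essentially the same argument as the paper: $\xi=x_2y_2$ must be a root of the genuine quadratic $\Phi$ (Lemma~\ref{lem:quadratic_z}), and Lemma~\ref{lem:reconstruct} gives at most one solution class per root. Your explicit check that $\xi$ is unchanged when the two color classes are interchanged, so that the class-to-root map is well defined, is a small bookkeeping detail the paper leaves implicit.
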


\begin{proof}
	By Lemma~\ref{lem:quadratic_z}, the quantity
	\[
	\xi=x_2y_2
	\]
	associated with any positive real solution must be a root of the
	quadratic polynomial $\Phi$. Since $\Phi$ has at most two real roots,
	and since Lemma~\ref{lem:reconstruct} shows that each positive root of
	$\Phi$ gives at most one solution class, the system has at most two
	equivalence classes of positive real solutions.
\end{proof}

\begin{proof}[Proof of Theorem~\ref{atmost1}]
	The original quadruple
	\[
	(s_1,s_2;t_1,t_2)
	\]
	is one solution of \eqref{eq:sys_n}--\eqref{eq:sys_b}. By
	Lemma~\ref{lem:atmost_two_solutions}, there can be at most one
	additional solution class. By Theorem~\ref{thm:rank4_gcospectral},
	this means that $G$ has at most one non-isomorphic generalized
	cospectral mate.
\end{proof}

The proof above is constructive. In fact, the original value
\[
\xi_0=s_2t_2
\]
is already one root of $\Phi$. Hence, by Vieta's formula, the only
possible second root is
\begin{equation}\label{eq:second_root}
	\xi_1=
	\frac{c^2-cnb+mb^2}
	{(m^2-4c)s_2t_2}.
\end{equation}
Thus the possible generalized cospectral mate, if it exists, can be
reconstructed by solving two quadratic equations.

More precisely, one first computes $\xi_1$ from
\eqref{eq:second_root}. If $\xi_1$ is not a positive integer
or $\xi_1=\xi_0$, then $G$ is DGS. Otherwise, one solves
\[
Z^2-\frac{b}{\xi_1}Z+\frac{c}{\xi_1}=0
\]
for $x_1,y_1$, and
\[
Z^2-
\left(n-\frac{b}{\xi_1}\right)Z+\xi_1=0
\]
for $x_2,y_2$.
A generalized cospectral mate exists if and only if
the roots of both equations are positive integers and can be
assigned to $x_1,y_1$ and $x_2,y_2$, respectively, so that
\eqref{eq:sys_m} holds. In this case,
$\mathrm{DNG}(x_1,x_2;y_1,y_2)$ is the unique generalized
cospectral mate of $G$.
\section{A sufficient condition for being DGS}

In this section we give a  simple condition which guarantees
uniqueness. It will also imply the result on double star graphs stated
in the Introduction.

\begin{proof}[Proof of Theorem~\ref{s1t1}]
	Set
	\[
	s_1=t_1=k,
	\qquad k\in\{1,2\}.
	\]
	Let $H$ be any graph generalized cospectral with $G$.
	By Lemma~\ref{lem:chain-closed} and Proposition~\ref{prop:isolated-vertices},
	$H$ is a connected chain graph. Since $H$ is cospectral with $G$,
	it also has rank $4$. Hence
	\[
	H=\mathrm{DNG}(x_1,x_2;y_1,y_2)
	\]
	for some positive integers $x_1,x_2,y_1,y_2$. By
	Theorem~\ref{thm:rank4_gcospectral}, the parameters of $H$ satisfy
	\eqref{eq:sys_n}--\eqref{eq:sys_b}.
	
	For $G$, we have
	\[
	c=k^2s_2t_2,
	\qquad
	b=2ks_2t_2.
	\]
	Hence
	\begin{equation}\label{eq:b_c_ratio}
		\frac{b}{c}=\frac{2}{k}.
	\end{equation}
	For $H$, equations \eqref{eq:sys_c} and
	\eqref{eq:sys_b} give
	\[
	\frac{b}{c}
	=
	\frac{x_1+y_1}{x_1y_1}.
	\]
	Combining this with \eqref{eq:b_c_ratio}, we obtain
$
	k(x_1+y_1)=2x_1y_1
$,
	or equivalently,
	\begin{equation}\label{eq:kfactor}
		(2x_1-k)(2y_1-k)=k^2.
	\end{equation}
	
	If $k=1$, then
	\[
	(2x_1-1)(2y_1-1)=1.
	\]
	Since $x_1$ and $y_1$ are positive integers, we must have $
	x_1=y_1=1$.
	
	If $k=2$, then \eqref{eq:kfactor} becomes $
	(x_1-1)(y_1-1)=1$. 
	Again, positivity and integrality imply
	$	x_1=y_1=2$.
	
	Thus, in either case, $
	x_1=y_1=k=s_1=t_1$.	Now \eqref{eq:sys_c} gives $
	x_2y_2=s_2t_2,
	$ while \eqref{eq:sys_n} gives
	$
	x_2+y_2=s_2+t_2
	$. 
	Therefore
	\[
	\{x_2,y_2\}=\{s_2,t_2\}.
	\]
	Consequently, either
	\[
	(x_1,x_2;y_1,y_2)
	=
	(s_1,s_2;t_1,t_2),
	\]
	or it is obtained from this quadruple by interchanging the two
	bipartition classes. In either case,
	$
	H\cong G
	$.	Hence $G$ is DGS.
\end{proof}
\begin{proof}[Proof of Theorem~\ref{main}]
	Every double star $D(s,t)$ can be written as
$	\mathrm{DNG}(1,s;1,t)$. 
	Thus Theorem~\ref{s1t1}, with $s_1=t_1=1$, shows that every double
	star is DGS.
\end{proof}
The following example shows that Theorem~\ref{s1t1} does not
extend to the case $s_1=t_1=3$ and that the bound in
Theorem~\ref{atmost1} is sharp.
\begin{example}\label{ex:k3}\normalfont
	Consider
	\[
	G=\mathrm{DNG}(3,8;3,3).
	\]
	For this graph, the four generalized spectral invariants introduced
	in Section~\ref{sec:rank4} are
	\[
	n=17,\qquad m=42,\qquad c=216,\qquad b=144.
	\]
	The quadratic polynomial in Lemma~\ref{lem:quadratic_z} becomes
	\[
	\Phi(\xi)=900(\xi-24)(\xi-18).
	\]
	The root $\xi=24$ corresponds to the original graph, since
	$s_2t_2=8\cdot3=24$. Consider the other root $\xi=18$.
	By Lemma~\ref{lem:reconstruct},
	\[
	x_1+y_1=\frac{b}{\xi}=8,
	\qquad
	x_1y_1=\frac{c}{\xi}=12,
	\]
	and hence
	\[
	\{x_1,y_1\}=\{2,6\}.
	\]
	Similarly,
	\[
	x_2+y_2=n-\frac{b}{\xi}=9,
	\qquad
	x_2y_2=\xi=18,
	\]
	so that
	\[
	\{x_2,y_2\}=\{3,6\}.
	\]
	Choosing
	\[
	(x_1,x_2;y_1,y_2)=(2,3;6,6),
	\]
	we have
	\[
	x_1y_1+x_1y_2+x_2y_1
	=2\cdot6+2\cdot6+3\cdot6
	=42=m.
	\]
	Therefore
	\[
	H=\mathrm{DNG}(2,3;6,6)
	\]
	satisfies all four equations in
	Theorem~\ref{thm:rank4_gcospectral}. Hence $G$ and $H$ are
	generalized cospectral.
	
	The graphs $G$ and $H$ are not isomorphic,
	since their bipartition sizes are
	$\{11,6\}$ and $\{5,12\}$, respectively.
	Thus $G$ is not DGS.
\end{example}
\section{Further DGS but non-DS chain graphs}
\label{sec:family}
We finish by combining the quadratic reduction of
Section~\ref{sec:number_mates} with a known cospectral construction.
This gives a sufficient condition for further rank-$4$ chain graphs
to be DGS but not DS, with the family in Theorem~\ref{family} as a
special case.

The following cospectral construction is due to
An\dj eli\'c and Tura \cite[Theorem~3.5]{Andelic2025}; we include the
short proof for completeness.
\begin{lemma}[\cite{Andelic2025}]\label{lem:AT_construction}
	Let $\ell,t_1,t_2,s_1$ be positive integers with $\ell\ge2$ and
	\begin{equation}\label{eq:AT_conditions}
		\ell(s_1-t_1)-t_2(\ell-1)=0.
	\end{equation}
	Then the connected chain graphs
	\begin{equation}\label{eq:AT_pair}
		G=\mathrm{DNG}(t_1,t_2;\ell t_1,s_1),
		\qquad
		H=\mathrm{DNG}(s_1,t_2;t_1,\ell t_1)
	\end{equation}
	are cospectral.
\end{lemma}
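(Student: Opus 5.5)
The plan is to apply Lemma~\ref{lem:rank4_cospectral}: two connected rank-$4$ chain graphs are cospectral exactly when they share $n$, $m$ and $c$. So it suffices to check the three equalities \eqref{eq:condition1}--\eqref{eq:condition3} for the pair in \eqref{eq:AT_pair}. For $G=\mathrm{DNG}(t_1,t_2;\ell t_1,s_1)$ the parameters are $(s_1,s_2;t_1,t_2)\mapsto(t_1,t_2;\ell t_1,s_1)$. For $H=\mathrm{DNG}(s_1,t_2;t_1,\ell t_1)$ they are $(s_1,t_2;t_1,\ell t_1)$.

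The order and product conditions are immediate. Both graphs have order $t_1+t_2+\ell t_1+s_1$. For the product, $G$ gives $t_1\cdot t_2\cdot \ell t_1\cdot s_1$ and $H$ gives $s_1\cdot t_2\cdot t_1\cdot \ell t_1$, which agree. Neither of these uses \eqref{eq:AT_conditions}.

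The only real content is the edge count $m$. For $G$ we get
\[
m_G=t_1\cdot\ell t_1+t_1 s_1+t_2\cdot \ell t_1,
\]
and for $H$ we get
\[
m_H=s_1t_1+s_1\ell t_1+t_2t_1 .
\]
Subtracting and cancelling $s_1t_1$ gives
\[
m_G-m_H=t_1\bigl(\ell t_1+\ell t_2-\ell s_1-t_2\bigr)=-t_1\bigl(\ell(s_1-t_1)-t_2(\ell-1)\bigr),
\]
which vanishes by \eqref{eq:AT_conditions}. This is the step where the hypothesis enters, and I expect no obstacle beyond careful bookkeeping of which cell sizes play the roles of $s_i$ and $t_i$.

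Finally, I would note that all entries are positive integers: $s_1\ge1$ by assumption, and $\ell t_1\ge1$. So both graphs are genuine connected chain graphs of rank $4$ by Lemma~\ref{lem:rank-chain}, and Lemma~\ref{lem:rank4_cospectral} applies and yields cospectrality.
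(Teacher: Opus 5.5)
Your proposal is correct and matches the paper's proof: $n$ and $c$ agree trivially, and hypothesis \eqref{eq:AT_conditions} is used only to show $m(G)-m(H)=-t_1\bigl(\ell(s_1-t_1)-t_2(\ell-1)\bigr)=0$. Lemma~\ref{lem:rank4_cospectral} then gives cospectrality. Your remark that all cell sizes are positive, so both graphs are genuine rank-$4$ DNGs, is a harmless extra check the paper leaves implicit.
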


\begin{proof}
	The graphs $G$ and $H$ have the same values of $n$ and $c$.
	Moreover, by \eqref{eq:AT_conditions},
	\begin{align*}
		m(G)-m(H)
		&=\ell t_1^2+\ell t_1t_2-\ell s_1t_1-t_1t_2\\
		&=-t_1\bigl(\ell(s_1-t_1)-t_2(\ell-1)\bigr)=0.
	\end{align*}
	Hence they are cospectral by
	Lemma~\ref{lem:rank4_cospectral}.
\end{proof}

The graphs in Lemma~\ref{lem:AT_construction} need not be
non-isomorphic. Indeed, if $s_1=\ell t_1$, then
\eqref{eq:AT_conditions} gives $t_2=\ell t_1=s_1$, and interchanging
the two color classes gives an isomorphism between $G$ and $H$.
The next result excludes this case and uses only the second-root
test from Section~\ref{sec:number_mates}.

For the graph $G$ in \eqref{eq:AT_pair}, the invariants defined in
\eqref{eq:nmbc} are
\begin{equation}\label{eq:AT_invariants}
	\begin{aligned}
		n&=(\ell+1)t_1+s_1+t_2, &
		m&=t_1(\ell t_1+s_1+\ell t_2),\\
		c&=\ell s_1t_1^2t_2, &	b&=(\ell+1)s_1t_1t_2.
	\end{aligned}
\end{equation}

\begin{theorem}\label{thm:AT_DGS}
	Let $G$ be as in Lemma~\ref{lem:AT_construction}, with
	$s_1\ne\ell t_1$. If
	\begin{equation}\label{eq:AT_DGS_test}
		\xi_1=
		\frac{c^2-cnb+mb^2}
		{(m^2-4c)s_1t_2}
		\notin\mathbb{Z}_{>0},
	\end{equation}
	where $n,m,c,b$ are given by \eqref{eq:AT_invariants}, then
	$G$ is DGS but not DS. Moreover, $G$ is not a double star,
	and both $G$ and its complement are connected.
\end{theorem}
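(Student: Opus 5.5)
The plan is to combine the second-root test from Section~\ref{sec:number_mates} (for DGS) with the An\dj eli\'c--Tura construction in Lemma~\ref{lem:AT_construction} (for non-DS), and then check the structural claims directly. First I fix notation. Reading off \eqref{eq:AT_pair}, $G=\mathrm{DNG}(s_1',s_2';t_1',t_2')$ with $(s_1',s_2';t_1',t_2')=(t_1,t_2;\ell t_1,s_1)$. Hence the original root of $\Phi$ is $\xi_0=s_2't_2'=t_2s_1$, and the Vieta formula \eqref{eq:second_root} becomes exactly the quantity $\xi_1$ in \eqref{eq:AT_DGS_test}. I also note that \eqref{eq:AT_conditions} gives $s_1=t_1+t_2(\ell-1)/\ell$. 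So $s_1>t_1$, and all parameters are positive integers by hypothesis.

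\emph{DGS.} Let $H'$ be generalized cospectral with $G$. By Lemma~\ref{lem:chain-closed}, Proposition~\ref{prop:isolated-vertices} and rank preservation, $H'=\mathrm{DNG}(x_1,x_2;y_1,y_2)$ is a connected rank-$4$ chain graph. By Theorem~\ref{thm:rank4_gcospectral}, its parameters solve \eqref{eq:sys_n}--\eqref{eq:sys_b}. By Lemma~\ref{lem:quadratic_z}, $\xi=x_2y_2$ is a positive integer root of $\Phi$, and the roots of $\Phi$ are $\xi_0$ and $\xi_1$. Since $\xi_1\notin\mathbb{Z}_{>0}$, we must have $\xi=\xi_0$. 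Lemma~\ref{lem:reconstruct} then gives at most one solution class for this $\xi$, and $G$'s own parameters already form that class. Hence $H'\cong G$.

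\emph{Not DS.} Lemma~\ref{lem:AT_construction} gives the cospectral graph $H=\mathrm{DNG}(s_1,t_2;t_1,\ell t_1)$, and it remains to show $H\not\cong G$. The key point is that for a connected chain graph the cells are intrinsic: they are the classes of vertices with equal neighborhoods, and the nesting order is determined by these neighborhoods. Therefore the parameter quadruple is an isomorphism invariant up to interchanging the two color classes. This is the step needing the most care, but it follows from the definition of DNG. Now compare the quadruples:
\begin{itemize}
\item A direct match $(t_1,t_2;\ell t_1,s_1)=(s_1,t_2;t_1,\ell t_1)$ would force $\ell t_1=t_1$, impossible since $\ell\ge2$.
\item A match after interchange, $(t_1,t_2;\ell t_1,s_1)=(t_1,\ell t_1;s_1,t_2)$, would force $s_1=\ell t_1$, which is excluded by hypothesis.
\end{itemize}
Hence $G$ is not DS.

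\emph{Structural claims.} $G$ is connected by construction. The double star $D(s,t)$ is $\mathrm{DNG}(1,s;1,t)$, so by cell invariance $G$ would need $\{t_1,\ell t_1\}=\{1\}$ for its first cells. This is impossible because $\ell t_1\ge2$. For the complement, each color class of $G$ induces a clique in $\overline G$. Moreover, a vertex of $U_2$ (cell of size $t_2$) and a vertex of $V_2$ (cell of size $s_1$) are nonadjacent in $G$, since $h=2$. That gives an edge of $\overline G$ joining the two cliques, so $\overline G$ is connected. The only substantive obstacle I anticipate is stating the invariance of the DNG parameters cleanly; everything else is direct substitution.
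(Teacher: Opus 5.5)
Your proof is correct. The DGS part (Vieta's second root $\xi_1$, integrality forcing $x_2y_2=\xi_0=s_1t_2$, then Lemma~\ref{lem:reconstruct}) and the complement argument are exactly the paper's. Where you differ is in showing that the cospectral partner $H=\mathrm{DNG}(s_1,t_2;t_1,\ell t_1)$ is not isomorphic to $G$.

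\textbf{Your route.} You argue structurally: in a connected chain graph the bipartition and the cells (twin classes, ordered by neighborhood inclusion) are intrinsic. The parameter quadruple is therefore an isomorphism invariant up to swapping color classes. The two possible matchings then fail because $\ell\ge2$ and $s_1\ne\ell t_1$.

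\textbf{The paper's route.} The paper computes
\[
b(H)-b(G)=t_1t_2(\ell t_1-s_1)\ne0 .
\]
By Theorem~\ref{thm:rank4_gcospectral}, or directly from $N_2=nm-b$, the two graphs are not even generalized cospectral, hence certainly not isomorphic.

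\textbf{Comparison.} The paper's version is a one-line computation inside its own framework. It never needs the invariance of DNG parameters, which you rightly flag as the delicate step. If you keep your route, you should make that invariance explicit. Twins lie on the same side because they share a neighbor. The two cells on the $U$-side have different neighborhoods because $V_2\neq\emptyset$, and likewise on the $V$-side because $U_2\neq\emptyset$.

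Your version is purely combinatorial, and it lets you reuse the same invariance to exclude double stars. The paper instead exhibits a $4$-cycle between $V_1$ and $U_1\cup U_2$, using $\ell t_1\ge2$ and $t_1+t_2\ge2$. Since a double star is a tree, that check is slightly more self-contained.
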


\begin{proof}
	Let $H$ be the graph in \eqref{eq:AT_pair}. By
	Lemma~\ref{lem:AT_construction}, $G$ and $H$ are cospectral.
	However,
	\begin{align*}
		b(H)-b(G)
		&=\ell t_1t_2(s_1+t_1)-(\ell+1)s_1t_1t_2\\
		&=t_1t_2(\ell t_1-s_1)\ne0.
	\end{align*}
	Thus Theorem~\ref{thm:rank4_gcospectral} shows that they are
	not generalized cospectral, and in particular they are not
	isomorphic. Hence $G$ is not DS.
	
	Now let $K$ be any graph generalized cospectral with $G$.
	By Lemma~\ref{lem:chain-closed} and
	Proposition~\ref{prop:isolated-vertices}, $K$ is a connected
	chain graph. Since its adjacency rank is $4$, we may write
	\[
	K=\mathrm{DNG}(x_1,x_2;y_1,y_2).
	\]
	By Lemma~\ref{lem:quadratic_z} and \eqref{eq:second_root},
	\[
	x_2y_2\in\{\xi_0,\xi_1\},
	\qquad \xi_0=s_1t_2.
	\]
	Since $x_2y_2$ is a positive integer, condition
	\eqref{eq:AT_DGS_test} forces $x_2y_2=\xi_0$.
	Lemma~\ref{lem:reconstruct} then implies that the parameter
	quadruple of $K$ agrees with that of $G$ up to interchange of
	the two color classes. Thus $K\cong G$, proving that $G$ is DGS.
	
	Finally, $\ell t_1\ge2$, and $V_1$ is complete to
	$U_1\cup U_2$, which contains at least two vertices. Hence
	$G$ contains a $4$-cycle and is not a double star.
	The graph $G$ is connected by construction. In its complement,
	each color class induces a clique, and all edges between
	$U_2$ and $V_2$ are present. Thus its complement is also connected.
\end{proof}
For example, take $\ell=3$, $t_1=2$, $t_2=3$, and $s_1=4$. Then
$G=\mathrm{DNG}(2,3;6,4)$ is one of the cospectral graphs in
\cite[Example~3.4]{Andelic2025}. Here
\[
n=15,\qquad m=38,\qquad b=96,\qquad c=144,
\qquad \xi_1=\frac{3408}{217}\notin\mathbb{Z}_{>0}.
\]
Thus Theorem~\ref{thm:AT_DGS} shows that $G$ is DGS but not DS.

We now apply the criterion to the family $G_k$ in
Theorem~\ref{family}. The following arithmetic fact is the
remaining step.

\begin{lemma}\label{lem:noninteger_root}
	For every integer $k\ge2$, the number
	\begin{equation}\label{eq:rk}
		r_k=
		\frac{k(14k^2+68k+54)}{9k^2+14k+9}
	\end{equation}
	is not an integer.
\end{lemma}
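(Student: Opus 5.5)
The plan is to reduce the claim to a finite check by polynomial division, and then dispose of the remaining cases. Write
\[
N_k=k(14k^2+68k+54)=14k^3+68k^2+54k,\qquad D_k=9k^2+14k+9,
\]
so that $r_k=N_k/D_k$. Carrying out the division by $D_k$ over $\mathbb{Q}$, after clearing the leading coefficient $9$ twice, should give the identity
\[
81N_k=(126k+416)D_k-(2584k+3744).
\]
I will verify this by expanding both sides. It follows that
\[
81\,r_k=126k+416-\frac{2584k+3744}{D_k}.
\]
Hence, if $r_k$ were an integer, then $q_k:=(2584k+3744)/D_k$ would be a positive integer. Moreover, since $81r_k\equiv0\pmod{81}$, it would also satisfy the congruence $q_k\equiv 126k+416\pmod{81}$.

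The next step bounds $q_k$. For $k\ge2$ the numerator is linear and the denominator quadratic, so $q_k$ is decreasing in $k$. At $k=2$ it equals $8912/73<123$. It falls below $1$ once $9k^2+14k+9>2584k+3744$, which happens for all $k\ge 288$ or so. So it suffices to rule out $2\le k\le 287$, or equivalently to rule out each integer value $1\le q\le 122$.

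The main obstacle is this finite verification, which I intend to organize around the possible values of $q$ rather than of $k$. For a fixed $q$, the condition $q D_k=2584k+3744$ is the quadratic equation
\[
9qk^2+(14q-2584)k+(9q-3744)=0
\]
in $k$. An integer root $k$ must therefore make the discriminant $(14q-2584)^2-36q(9q-3744)$ a perfect square, and $k$ must divide $9q-3744$. I will first prune the list of $q$ using the congruence $q\equiv126k+416\pmod{81}$, together with the arithmetic fact $\gcd(k,D_k)=\gcd(k,9)$, which forces $D_k/\gcd(k,9)$ to divide $14k^2+68k+54$. I will then check the surviving $q$ directly, either through the discriminant or by simply evaluating $N_k \bmod D_k$ for $2\le k\le 287$. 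I expect this to show that no $k\ge2$ yields an integer, which proves the claim.

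If the hand computation becomes unwieldy, I would instead sharpen the bound. I would subtract a further multiple of $D_k$ to shrink the linear remainder. Alternatively, I would note that $D_k\mid 2584k+3744$ forces $D_k\le 2584k+3744$, and then combine this with $D_k\mid 81N_k$ to obtain a second, independent linear combination. Eliminating $k$ between the two should bound $D_k$ by a small constant, leaving only a handful of values of $k$ to inspect.
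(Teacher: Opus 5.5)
Your reduction is correct. The identity $81N_k=(126k+416)D_k-(2584k+3744)$ checks out, $q_k$ decreases from $q_2=8912/73$, and $q_k<1$ exactly for $k\ge 288$, so only $2\le q\le 122$ has to be excluded. As written, however, the finite check is only promised, and that check is the entire content of the lemma.

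It closes quickly. Your congruence reduces to $q\equiv 45k+11\pmod{81}$, hence $q\equiv 2\pmod 9$. The discriminant equals $32(-4q^2+1951q+208658)$, whose $2$-adic valuation is $5$ for odd $q$, so odd $q$ are excluded. For the seven surviving values $q\in\{2,20,38,56,74,92,110\}$, the number $(-4q^2+1951q+208658)/2$ equals $106272$, $123039$, $138510$, $152685$, $165564$, $3^{11}$, $187434$, none of which is a perfect square.

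Your fallback would not help. Eliminating $k$ between $D_k$ and $2584k+3744$ yields at best $D_k\mid 8\cdot 3^8\cdot 11^2$, which is far from a small constant.

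The paper argues differently. It uses the fact $\gcd(k,Q)=\gcd(k,9)=g$, which you treat only as a sieve, at the outset. Cancelling the factor $k$ this way means $Q/g$ must divide the quadratic $P=14k^2+68k+54$ itself. Since $P$ and $Q/g$ have the same degree, the integer $r=gP/Q$ is trapped in $(1,4)$, $(4,9)$ or $(14,19)$ for $g=1,3,9$. This leaves $r\in\{2,3\}$, $r=6$ (after a mod-$3$ argument), and $r\in\{15,16,17,18\}$: seven quadratics with non-square discriminants, and no bound on $k$ is ever needed.

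Your Euclidean division of the cubic $kP$ by $Q$ is the more generic, mechanical method and gives an explicit finite range at once. But because the factor $k$ is kept, the remainder ratio is only bounded by $122$. You then need the extra mod-$81$ (and, as above, parity) sieving to reach the same amount of casework that the cancellation gives for free.
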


\begin{proof}
	Put
	\[
	P=14k^2+68k+54,
	\qquad Q=9k^2+14k+9.
	\]
	Suppose that $r_k=kP/Q$ is an integer. Since
	$\gcd(k,Q)=\gcd(k,9)$, putting
	$g=\gcd(k,9)\in\{1,3,9\}$ gives $\frac{Q}{g}\mid P$.
	Thus
	\[
	P=r\frac{Q}{g}
	\]
	for some positive integer $r$.
	
	If $g=1$, then $Q<P<4Q$ for $k\ge2$, so $r\in\{2,3\}$.
	The equations $P=2Q$ and $P=3Q$ reduce, respectively, to
	\[
	k^2-10k-9=0,
	\qquad 13k^2-26k-27=0.
	\]
	Their discriminants are $136$ and $2080$, neither of which is
	a perfect square.
	
	If $g=3$, write $k=3u$, where $3\nmid u$. Then
	\[
	Q/3=27u^2+14u+3,
	\qquad P=126u^2+204u+54.
	\]
	Since $4Q/3<P<9Q/3$, we have $r\in\{5,6,7,8\}$.
	Modulo $3$, however, $P\equiv0$ and
	$Q/3\equiv2u\not\equiv0$, so $3\mid r$.
	Hence $r=6$, giving
	\[
	3u^2-10u-3=0.
	\]
	Its discriminant is $136$, again not a perfect square.
	
	Finally, if $g=9$, write $k=9u$. Then
	\[
	Q/9=81u^2+14u+1,
	\qquad P=1134u^2+612u+54.
	\]
	For $u\ge1$, we have $14Q/9<P<19Q/9$, so
	$r\in\{15,16,17,18\}$. The corresponding equations are
	\[
	\begin{aligned}
		27u^2-134u-13&=0, &
		81u^2-194u-19&=0,\\
		243u^2-374u-37&=0, &
		9u^2-10u-1&=0.
	\end{aligned}
	\]
	Their discriminants are, respectively,
	\[
	19360,\qquad 43792,\qquad 175840,\qquad 136,
	\]
	none of which is a perfect square. All cases are impossible,
	proving the lemma.
\end{proof}

\begin{proof}[Proof of Theorem~\ref{family}]
	Let $k\ge2$. Apply Lemma~\ref{lem:AT_construction} with
	\[
	\ell=2,\qquad t_1=k,\qquad t_2=2,\qquad s_1=k+1.
	\]
	Then \eqref{eq:AT_conditions} holds, and $s_1\ne\ell t_1$.
	The corresponding graphs in \eqref{eq:AT_pair} are
	\begin{equation}\label{eq:GkHk}
		G_k=\mathrm{DNG}(k,2;2k,k+1),
		\qquad
		H_k=\mathrm{DNG}(k+1,2;k,2k).
	\end{equation}
	For $G_k$, we have
	\[
	\begin{aligned}
		n&=4k+3, & m&=3k^2+5k,\\
		b&=6k(k+1), & c&=4k^2(k+1).
	\end{aligned}
	\]
	Since $\xi_0=2(k+1)$, formula~\eqref{eq:second_root} gives
	\[
	\xi_1=
	\frac{2k(k+1)(7k+27)}{9k^2+14k+9}
	=r_k.
	\]
	By Lemma~\ref{lem:noninteger_root}, this is not an integer.
	Theorem~\ref{thm:AT_DGS} now applies and proves the result.
\end{proof}
\section*{Declaration of competing interest}
	The authors declare that they have no known competing financial interests or personal relationships that could have appeared to influence the work reported in this paper.
	
	\section*{Acknowledgments}
	This work is partially supported by the National Natural Science Foundation of China (Grant No.~12001006) and the Wuhu Science and Technology Project, China (Grant No.~2024kj015).
	
	\section*{Declaration of generative AI and AI-assisted technologies in the manuscript preparation process}
	During the development and preparation of this work, the authors used GPT and Gemini as auxiliary tools for exploratory computation, counterexample testing, and language refinement. The research problem, mathematical ideas, and proof strategy were developed by the authors. All AI-generated outputs were independently examined and verified by the authors. The authors finalized the results and proofs and take full responsibility for the originality, accuracy, and integrity of the manuscript.

\end{document}